\theoremstyle{plain}
\newtheorem{thm}{Theorem}[section]
\theoremstyle{definition}
\newtheorem{dfn}[thm]{Definition}
\newtheorem{rem}[thm]{Remark}
\theoremstyle{plain}
\newtheorem{lem}[thm]{Lemma}
\newtheorem{prop}[thm]{Proposition}
\newtheorem{cor}[thm]{Corollary}
\theoremstyle{definition}
\newcommand{\s}{\mathbf{S}}
\numberwithin{equation}{section}
\title{Quasi-isomorphism of grid chain complexes for a connected sum of knots}
\author{Hajime Kubota}
\date{}
\subjclass{57K18}
\keywords{grid homology; knot Floer homology; K\"{u}nneth formula; spatial graph}
\begin{document}

\begin{abstract}
We give a purely combinatorial proof of a K\"{u}nneth formula for the minus version of knot Floer homology of connected sums by constructing a quasi-isomorphism of grid chain complexes.
The quasi-isomorphism naturally deduces that the Legendrian and transverse invariants behave functorially with respect to the connected sum operation.
\end{abstract}

\maketitle

\section{Introduction}

Grid homology is a combinatorial reconstruction of knot Floer homology developed by Manolescu, Ozsv\'{a}th, Szab\'{o}, and Thurston \cite{oncombinatorial}.
It is an interesting problem whether a purely combinatorial proof of known results of knot Floer homology is given using grid homology.
For example, there are combinatorial reformulations of the knot Floer invariants such as the tau, epsilon, and Upsilon invariant \cite{grid-tau, grid-upsilon, A-combinatorial-description-of-the-knot-concordance-invariant-epsilon}.
Furthermore, the concordance invariance of the tau and Upsilon invariant are also proved purely combinatorially \cite{Grid-diagrams-and-the-Ozsvath-Szabo-tau-invariant, grid-upsilon-concordance}.

Throughout the paper, we will only consider knots in $S^3$ and work with the minus version of grid homology and knot Floer homology with $\mathbb{F}=\mathbb{Z}/2\mathbb{Z}$ coefficients.
The main purpose of this paper is to give a combinatorial proof of K\"{u}nneth formula for knot Floer homology of connected sums proved in \cite{Holomorphic-disks-and-knot-invariants},
\begin{equation}
\label{eq:kunneth}
HFK^-(K_1)\otimes_{\mathbb{F}[U]} HFK^-(K_2)\cong HFK^-(K_1\# K_2).
\end{equation}

The connected sum operation has not been treated in grid homology, despite being a basic operation for knots.
We remark that the standard textbook of grid homology by Ozsv\'{a}th, Stipsicz, and Szab\'{o} \cite{grid-book} does not contain this formula \eqref{eq:kunneth}.
The grid Legendrian and transverse invariants are combinatorial invariants defined using grid homology, but their additivity under connected sums was proved using knot Floer homology (See Remark \ref{rem:additivity-of-the-Legendrian}).

Recently, the author \cite{Grid-homology-for-spatial-graphs-and-a-Kunneth-formula-of-connected-sum} proved the K\"{u}nneth formula of the hat version using grid homology.
The proof was done using the extended grid homology for spatial graphs and thus was not completed within the framework of the knot grid homology.
In this paper, we will give quasi-isomorphism of the grid complex $GC^-$ and prove the K\"{u}nneth formula \eqref{eq:kunneth} using only knot grid homology, without the extended grid homology for spatial graphs.
Furthermore, we will quickly prove the additivity of the grid Legendrian and transverse invariants using our quasi-isomorphism.

\subsection{Main results}
First, we construct the grid diagram representing $K_1\# K_2$ from two grid diagrams representing $K_1$ and $K_2$.
Let $g_1$ and $g_2$ be two $(n-1)\times(n-1)$ grid diagrams representing two knots $K_1$ and $K_2$ respectively.
We can assume that $g_1$ has an $X$-marking in the top left square and $g_2$ bottom right square (see Figure \ref{fig:g1g2-to-g}).
Remove the top $X$-marking of $g_1$, attach one new row and column, and put new $X$-markings in the top right and bottom left square and new $O$-marking in the bottom right square. 
The resulting diagram is denoted by $g_1'$.
Take a new diagram $g_2'$ from $g_2$ similarly, but the new $O$-marking is in the top left square.
Construct a $2n\times2n$ grid diagram whose northwest $n\times n$ block is the same as $g_1'$ and southeast $n\times n$ block is the same as $g_2'$.
Switch the two $O$-markings in the $n$-th and $(n+1)$-th row.
Then we call the resulting grid diagram $g_\#$.
Clearly $g_\#$ represents $K_1\# K_2$.
We remark that the transformation from $g_1$ and $g_2$ to $g_1'$ and $g_2'$ respectively is done by a finite sequence of moves on grid diagrams corresponding to Reidemeister moves.

\begin{figure}
\centering
\includegraphics[scale=0.5]{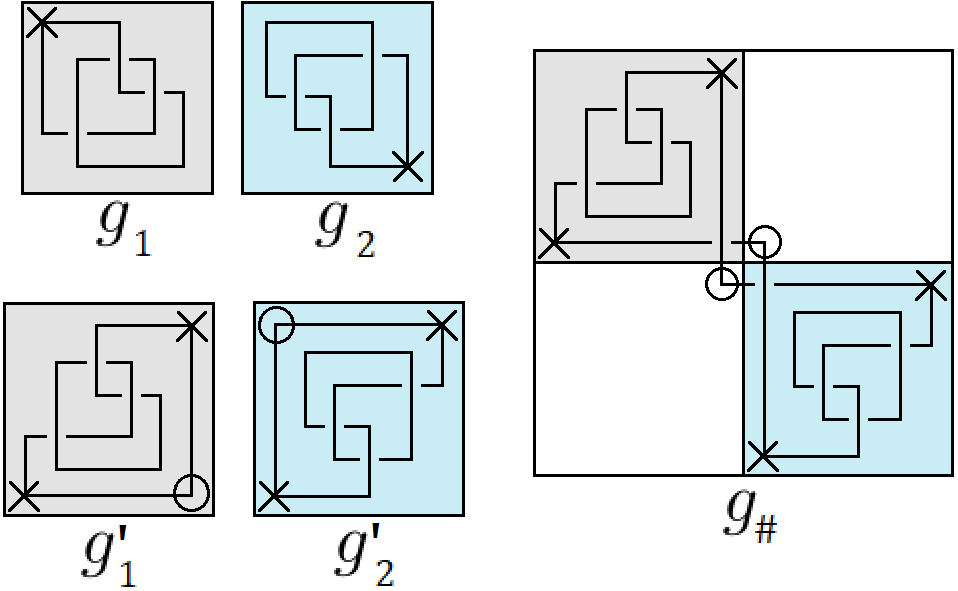}
\caption{An example of grid diagrams for the main theorem. Left: grid diagrams $g_1$, $g_2$ and stabilized grid diagrams $g_1'$, $g_2'$. Right: the combined grid diagram from $g_1'$, $g_2'$.}
\label{fig:g1g2-to-g}
\end{figure}

The grid chain complex $GC^-(g)$ for an $n\times n$ grid diagram $g$ is a bigraded chain complex over $\mathbb{F}[U_1,\dots,U_n]$, where $\mathbb{F}=\mathbb{Z}/2\mathbb{Z}$ and $U_i$'s are formal variables.
See Section \ref{sec:grid-homology-for-knots} or \cite{grid-book} for details.

\begin{thm}
\label{thm:main:connected}
Let $g_1, g_2$, and $g_\#$ be grid diagrams described above.
There are a subcomplex $C$ of $GC^-(g_\#)$ and two quasi-isomorphisms 
\[
C\to GC^-(g_\#)\hspace{5mm} \mathrm{and} \hspace{5mm} C\to GC^-(g_1)\otimes_{\mathbb{F}}GC^-(g_2)/U_1=U_{2n},
\]
where $GC^-(g_1), GC^-(g_2)$, and $GC^-(g_\#)$ are regarded as modules over $\mathbb{F}[U_1,\dots,U_{n-1}]$, $\mathbb{F}[U_{n+1},\dots,U_{2n}]$, and $\mathbb{F}[U_1,\dots,U_{2n}]$, and these maps are bigraded over $\mathbb{F}[U_1,\dots,U_{2n}]$ and over $\mathbb{F}[U_1,\dots,U_{n-1}, U_{n+1},\dots,U_{2n}]$ respectively.
\end{thm}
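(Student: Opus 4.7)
The plan is to identify $C$ combinatorially as the submodule of $GC^-(g_\#)$ generated by those states $\mathbf{x}$ whose underlying permutation respects the block decomposition of the $2n\times 2n$ grid $g_\#$. Concretely, I would declare a generator to belong to $C$ if it contains the two ``corner'' intersection points that sit at the boundary between the northwest and southeast $n\times n$ blocks (the distinguished corners created when the two $O$-markings on rows $n$ and $n+1$ were swapped), and if the remaining $2n-2$ components are split between the northwest and southeast blocks. Thus a generator of $C$ naturally consists of a generator of $GC^-(g_1')$ paired with a generator of $GC^-(g_2')$, together with fixed corner data at the connection. The first step is to check that $C$ is preserved by the differential: an empty rectangle in $g_\#$ whose corners all lie in $C$-states must either lie entirely in the northwest block, entirely in the southeast block, or be one of a few small exceptional rectangles near the connection; the swap of $O$-markings across rows $n$ and $n+1$ is designed precisely so that the exceptional contributions cancel modulo $2$.

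Next I would construct the inclusion $\iota\colon C\hookrightarrow GC^-(g_\#)$ and prove it is a quasi-isomorphism by showing that the quotient complex $GC^-(g_\#)/C$ is acyclic. The standard tool is a filtration argument: I would filter the generators of the quotient by the number of intersection points lying ``out of block position'' and construct an explicit chain homotopy on the associated graded piece. Concretely, each non-$C$ generator can be paired with a unique partner via a distinguished thin rectangle crossing rows $n$ and $n+1$ (using the two new $X$-markings placed in the top-right and bottom-left squares of the corresponding $n\times n$ blocks). This pairing produces a nullhomotopy for the associated graded, which by the standard acyclic-filtration argument gives acyclicity of the whole quotient.

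For the second map, I would define $\pi\colon C\to GC^-(g_1)\otimes_{\mathbb{F}} GC^-(g_2)/(U_1=U_{2n})$ by destabilizing the two auxiliary rows and columns. Since the generators of $C$ split into a northwest part (which, after undoing the stabilization that took $g_1$ to $g_1'$, is a generator of $GC^-(g_1)$) and a southeast part (similarly giving a generator of $GC^-(g_2)$), the map is literally ``split and destabilize.'' The $U_1=U_{2n}$ identification appears because the new $O$-marking in the southeast corner of $g_1'$ and the new $O$-marking in the northwest corner of $g_2'$ (after the swap) are mapped to the same variable through the destabilization. One then checks that $\pi$ is a chain map by matching rectangles in $g_\#$ supported in each block with rectangles in $g_1$ or $g_2$ under the stabilization equivalence (standard in the grid book), and that it is a quasi-isomorphism by combining the stabilization invariance theorem for $GC^-$ (applied separately to each block) with the K\"{u}nneth formula over the field $\mathbb{F}$.

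The main obstacle I expect is the bookkeeping around the connection region: controlling which rectangles that cross between the northwest and southeast blocks contribute to $\partial^-$ on $C$, and verifying that after the swap of $O$-markings the unwanted contributions pair up and cancel so that $C$ is really a subcomplex. A related subtlety is showing that the destabilization argument identifying $C$ with the tensor product correctly handles the bigradings and the $\mathbb{F}[U_1,\dots,U_{2n}]$-module structure; this requires tracking Maslov and Alexander shifts introduced by each of the two stabilizations used to build $g_1',g_2'$ from $g_1,g_2$ and by the swap. Once these grading shifts are computed once, the two quasi-isomorphisms $\iota$ and $\pi$ assemble into the statement of the theorem.
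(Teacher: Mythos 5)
Your overall strategy (locate a subcomplex $C$ whose inclusion into $GC^-(g_\#)$ is a quasi-isomorphism and which destabilizes to the tensor product) matches the paper's, but the concrete choice of $C$ and the justification that it is a subcomplex are both incorrect. The paper's $C$ is generated by $\mathbf{AD}_1(g_\#)\cup\s_0(g_\#)$: \emph{all} states whose two off-diagonal blocks are empty, \emph{plus} the states containing the corners $a=\alpha_1\cap\beta_1$, $d=\alpha_{n+1}\cap\beta_{n+1}$ with exactly one point in each off-diagonal block. Your $C$ is generated only by the states with two fixed corner points and the remaining $2n-2$ points split between the diagonal blocks, i.e.\ a single one of these two families (roughly $\mathbf{BC}_0$ or $\mathbf{AD}_1$, depending on which corners you mean). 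That module is not closed under $\partial^-$: for a state in $\mathbf{AD}_1$, a rectangle with $a$ and $d$ at two of its corners moves both of these points into the diagonal blocks and produces a target in $\s_0$ that does not contain your fixed corner data, and similarly removing the $b,c$-condition for the other reading. Including all of $\s_0$ is exactly what repairs this, because the off-diagonal count $k$ is non-increasing under the differential (the filtration $S_0\subset S_0\oplus S_1\subset\cdots$ of Proposition 5.2), so $\partial^-$ of an $\mathbf{AD}_1$-state lands in $C$ and $\partial^-$ of an $\s_0$-state stays in $\s_0$.

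Your explanation of why the differential preserves $C$ also rests on a misconception: the swap of $O$-markings in rows $n$ and $n+1$ does not cause any ``cancellation modulo $2$'' among rectangles. $O$-markings only contribute $U$-powers to the coefficient of a rectangle; they do not create or destroy rectangles, and the differential counts each empty rectangle once. The swap is a purely topological adjustment that makes $g_\#$ represent $K_1\#K_2$ and ensures each off-diagonal block carries exactly one $O$-marking and no $X$-marking; the latter is what controls which rectangles can cross between blocks. Finally, ``split and destabilize'' does not directly yield the second quasi-isomorphism: after the correction to $C$, the $\mathbf{AD}_1$ generators have no counterpart in a naive tensor product. The paper handles this by writing $C=\mathrm{Cone}(f\colon AD\to C_0)$, replacing the cone by $C_0/\mathrm{Im}(f)$ via the injectivity of $f$ (Lemma~\ref{lem:conef-c/f}), and only then applying $\mathcal{D}_{SE}\otimes\mathcal{D}_{NW}$; the identification $\mathrm{Im}(f)\cong(U_n+U_{n+1})\cdot GC^-(g_1)\otimes GC^-(g_2)[U_n,U_{n+1}]$ is what produces the quotient by $U_1=U_{2n}$. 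These mapping-cone steps are essential and are missing from your outline.
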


\begin{rem}
\begin{itemize}
    \item By letting $U_1=U_{2n}=U$, we have $\frac{GC^-(g_1)\otimes_{\mathbb{F}}GC^-(g_2)}{U_1=U_{2n}} \cong GC^-(g_1)\otimes_{\mathbb{F}[U]}GC^-(g_2)$ as $\mathbb{F}[U]$-modules.
Therefore this theorem implies the K\"{u}nneth formula of connected sum for the minus version of knot Floer homology.
    \item We will explicitly give the subcomplex $C$ and two quasi-isomorphisms in the above.
\end{itemize}

\end{rem}

In grid homology, there are two ways to define the tau invariant.
In this paper, we will use the following definition.
\begin{dfn}
For a knot $K$, $\tau(K)$ is $-1$ times the maximal integer $i$ for which there is a homogeneous, non-torsion element in $GH^-(K)$ with Alexander grading equal to $i$.
\end{dfn}
Sarker \cite[Theorem 3.3]{Grid-diagrams-and-the-Ozsvath-Szabo-tau-invariant} gave a combinatorial proof of the concordance invariance of the tau invariant.

\begin{cor}
\label{thm:tau-additivity}
\[
\tau(K_1\#K_2)=\tau(K_1)+\tau(K_2)
\]
for any two knots $K_1$ and $K_2$.
\end{cor}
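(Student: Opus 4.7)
The plan is to deduce the additivity from Theorem~\ref{thm:main:connected} by passing to homology and invoking the algebraic K\"unneth formula over the principal ideal domain $\mathbb{F}[U]$. First I would specialize the main theorem by setting $U_1=U_{2n}=U$, as indicated in the remark following Theorem~\ref{thm:main:connected}; the two quasi-isomorphisms then form a bigraded $\mathbb{F}[U]$-linear zig-zag relating $GC^-(g_\#)$ to $GC^-(g_1)\otimes_{\mathbb{F}[U]}GC^-(g_2)$ through $C$, and taking homology yields a bigraded $\mathbb{F}[U]$-module isomorphism between $GH^-(K_1\#K_2)$ and the homology of that tensor product.

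Since $\mathbb{F}[U]$ is a PID, the algebraic K\"unneth theorem decomposes this homology as
\[
\bigl(GH^-(K_1)\otimes_{\mathbb{F}[U]}GH^-(K_2)\bigr)\oplus\operatorname{Tor}_1^{\mathbb{F}[U]}\bigl(GH^-(K_1),GH^-(K_2)\bigr),
\]
with the usual bigrading shift on the Tor summand. Next I would invoke the classical structure of $GH^-$: for any knot $K$ there is a splitting $GH^-(K)\cong\mathbb{F}[U]\cdot\xi_K\oplus T_K$, where $T_K$ is the $U$-torsion submodule and $\xi_K$ is a homogeneous non-torsion generator which, by the definition of $\tau$ stated above, sits in Alexander grading $-\tau(K)$; every non-torsion homogeneous class is an $\mathbb{F}$-multiple of $U^k\xi_K$ for some $k\ge 0$, and so has Alexander grading at most $-\tau(K)$.

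With these ingredients the corollary reduces to bookkeeping. The Tor summand, together with any tensor factor involving $T_{K_1}$ or $T_{K_2}$, is $U$-torsion and therefore contributes no non-torsion elements, whereas the free-part tensor product $\mathbb{F}[U]\cdot(\xi_{K_1}\otimes\xi_{K_2})$ is a rank-one free $\mathbb{F}[U]$-module whose top Alexander grading is exactly $-\tau(K_1)-\tau(K_2)$. Reading $\tau(K_1\#K_2)$ off this maximum yields the claimed equality. The only point requiring genuine care is that the Alexander and Maslov gradings transport correctly through the identification $U_1=U_{2n}=U$ and through the K\"unneth splitting, but since the maps of Theorem~\ref{thm:main:connected} are declared bigraded and both $U_1$ and $U_{2n}$ lower the Alexander grading by $1$, this amounts to a routine compatibility check rather than a genuine obstacle.
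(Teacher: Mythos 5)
Your argument is correct and is exactly the (standard) chain of reasoning the paper compresses into ``direct consequence of Theorem~\ref{thm:main:connected}'': pass to homology through the zig-zag of bigraded quasi-isomorphisms, apply the algebraic K\"unneth theorem over the PID $\mathbb{F}[U]$ (valid since each $GC^-(g_i)$ is free over $\mathbb{F}[U]$), and then read off the top Alexander grading of the rank-one free part $\mathbb{F}[U]\cdot(\xi_{K_1}\otimes\xi_{K_2})$, noting that the Tor summand and all tensor factors involving $U$-torsion contribute only torsion. One tiny imprecision: a homogeneous non-torsion class need only equal $U^k\xi_K$ \emph{plus} a torsion element of the same bidegree rather than an $\mathbb{F}$-multiple of $U^k\xi_K$ on the nose, but this does not affect the grading bound or the conclusion.
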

\begin{proof}
It is a direct consequence of Theorem \ref{thm:main:connected}.
\end{proof}

Grid homology gives invariants for Legendrian and transverse knots in $(\mathrm{R}^3,\xi_{\mathrm{std}})$ \cite{grid-Legendrianknot}.
The invariant for Legendrian knots is called the grid Legendrian invariant or the GRID invariant.
There are examples of Legendrian knots with the topological type and the same classical invariants that are distinguished by the grid Legendrian invariant \cite[Section 8]{grid-Legendrianknot}.
In addition, this invariant provides obstructions to decomposable Lagrangian cobordisms, certain good cobordisms between two Legendrian knots \cite[Theorem 1.2]{Lagrangian-cobordisms-and-Legendrian-invariants-in-knot-Floer-homology}.
The grid transverse invariant is applied to study the transverse simplicity of topological knots \cite{Transverse-knots-distinguished-by-knot-Floer-homology}.
These invariants are extended to null-homologous Legendrian and transverse knots in general closed contact three-manifolds using knot Floer homology \cite{Heegaard-Floer-invariants-of-Legendrian-knots-in-contact-three-manifolds}.

The grid Legendrian invariants $\lambda^+,\lambda^-\in GH^-(\mathcal{K})$ are defined to be the homology classes determined by the canonical generators of the grid chain complex (Definition \ref{dfn:grid-Legendrian-inv}).
By the construction of the quasi-isomorphism of the above theorem, we can easily trace the generators and hence we can quickly show the additivity of the grid Legendrian invariant.
\begin{thm}
\label{thm:connected-Legendrian-invariant}
Let $g_1$, $g_2$, and $g_\#$ be grid diagrams representing a Legendrian knots $\mathcal{K}_1$, $\mathcal{K}_2$, and $\mathcal{K}_1\# \mathcal{K}_2$ respectively. 
Then there is an isomorphism
\[
GH^-(g_1)\otimes GH^-(g_2)\to GH^-(g_\#),
\]
which maps $\lambda^+(\mathcal{K}_1)\otimes\lambda^+(\mathcal{K}_2)$ to $\lambda^+(\mathcal{K}_1\#\mathcal{K}_2)$ and $\lambda^-(\mathcal{K}_1)\otimes\lambda^-(\mathcal{K}_2)$ to $\lambda^-(\mathcal{K}_1\#\mathcal{K}_2)$.
\end{thm}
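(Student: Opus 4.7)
The plan is to follow the zig-zag of bigraded quasi-isomorphisms
\[
GC^-(g_1)\otimes_{\mathbb{F}} GC^-(g_2)/(U_1=U_{2n}) \xleftarrow{\;\sim\;} C \xrightarrow{\;\sim\;} GC^-(g_\#)
\]
produced by Theorem \ref{thm:main:connected} and to trace the canonical cycles representing the grid Legendrian invariants through it. On homology the zig-zag induces the isomorphism asserted in the statement, so it suffices to exhibit chain-level representatives of $\lambda^\pm(\mathcal{K}_1)\otimes\lambda^\pm(\mathcal{K}_2)$ and $\lambda^\pm(\mathcal{K}_1\#\mathcal{K}_2)$ that correspond under the two maps.

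I first recall that $\lambda^+(\mathcal{K})$ and $\lambda^-(\mathcal{K})$ are, by definition, the homology classes of the canonical grid states $x^+(g)$ and $x^-(g)$, whose coordinates are the NE, respectively SW, corners of the squares containing an $X$-marking. Reading off the $X$-markings of $g_\#$ from its construction, one observes that $x^\pm(g_\#)$ splits cleanly into a state in the NW $n\times n$ block, which is the canonical generator $x^\pm(g_1')$ of the stabilized diagram $g_1'$, and a state in the SE block, which is $x^\pm(g_2')$. In particular the new $X$-markings introduced when passing from $g_i$ to $g_i'$ carry precisely the extra coordinates that distinguish $x^\pm(g_i')$ from $x^\pm(g_i)$.

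The crux of the proof is threefold: (i) $x^\pm(g_\#)$ lies in the subcomplex $C\subset GC^-(g_\#)$ constructed in the proof of Theorem \ref{thm:main:connected}; (ii) under the inclusion $C\hookrightarrow GC^-(g_\#)$ the class of $x^\pm(g_\#)$ in $C$ maps tautologically to the representative of $\lambda^\pm(\mathcal{K}_1\#\mathcal{K}_2)$; and (iii) under the quasi-isomorphism $C\to GC^-(g_1)\otimes_{\mathbb{F}} GC^-(g_2)/(U_1=U_{2n})$, the state $x^\pm(g_\#)$ is sent to $x^\pm(g_1)\otimes x^\pm(g_2)$. For (iii), the explicit formula supplied by the main theorem splits the state along the NW/SE block decomposition to produce $x^\pm(g_1')\otimes x^\pm(g_2')$, after which the destabilization quasi-isomorphisms $GC^-(g_i')\xrightarrow{\sim} GC^-(g_i)$ carry $x^\pm(g_i')$ to $x^\pm(g_i)$ by the standard mechanism used to establish Legendrian invariance of $\lambda^\pm$. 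Compatibility with the bigrading is automatic, since every map appearing in Theorem \ref{thm:main:connected} is bigraded.

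The principal obstacle I expect is the combinatorial bookkeeping in (i) and (iii): pinning down the precise shape of $C$ on corner states and confirming that no extra boundary terms are picked up when the quasi-isomorphism $C\to GC^-(g_1)\otimes GC^-(g_2)/(U_1=U_{2n})$ is evaluated on $x^\pm(g_\#)$. Because $x^\pm(g_\#)$ occupies the extremal corners of the $X$-markings, any empty rectangle emanating from it that would cross between the two blocks is forced to contain one of the two middle $O$-markings that were switched in forming $g_\#$, and such rectangles are excluded from the differential. This confinement should reduce the verification to a short, finite combinatorial check rather than a lengthy calculation.
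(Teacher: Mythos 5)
Your proposal follows the paper's own proof: trace the canonical cycles $\mathbf{x}^\pm$ through the two quasi-isomorphisms of Theorem \ref{thm:main:connected}, noting that $\mathbf{x}^\pm(g_\#)$ lies in the subcomplex $C$, maps to itself under the inclusion $C\hookrightarrow GC^-(g_\#)$, and is sent by $\eta$ to $\mathbf{x}^\pm(g_1)\otimes\mathbf{x}^\pm(g_2)$ because the state splits along the NW/SE blocks and the maps $e$ from the destabilization machinery carry $\mathbf{x}^\pm(g_i')$ to $\mathbf{x}^\pm(g_i)$. The one step you leave implicit, which the paper makes explicit, is the final appeal to Theorem \ref{thm:invariance-lambda} to transfer the conclusion from the specific constructed diagram $g_\#$ (and the normalized $g_1,g_2$) to the arbitrary grid diagrams appearing in the statement.
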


The grid transverse invariant is defined to be the homology class $\lambda^+\in GH^-(g)$, so the following corollary is proved immediately.
\begin{cor}
\label{cor:connected-transverse-invariant}
Let $g_1$, $g_2$, and $g_\#$  be two good grid diagrams representing the transverse knots $\mathcal{T}_1$, $\mathcal{T}_2$, and $\mathcal{T}_1\# \mathcal{T}_2$ respectively. 
Then there is an isomorphism
\[
GH^-(g_1)\otimes GH^-(g_2)\to GH^-(g_\#)
\]
which maps $\theta^-(\mathcal{T}_1)\otimes \theta^-(\mathcal{T}_2)$ to $\theta^-(\mathcal{T}_1\#\mathcal{T}_2)$.
\end{cor}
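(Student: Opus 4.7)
The plan is to derive this corollary essentially by unwinding the definition of the grid transverse invariant and invoking Theorem \ref{thm:connected-Legendrian-invariant}. Recall that for a transverse knot $\mathcal{T}$, the invariant $\theta^-(\mathcal{T})$ is, by definition, the homology class $\lambda^+(\mathcal{K})\in GH^-(g)$ of a Legendrian approximation $\mathcal{K}$ of $\mathcal{T}$, with $g$ a grid diagram representing $\mathcal{K}$. A ``good'' grid diagram for a transverse knot is, by the same token, a grid diagram of such a Legendrian approximation, so the transverse statement is obtained by a direct translation of the Legendrian statement.

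First I would choose Legendrian approximations $\mathcal{K}_1$ and $\mathcal{K}_2$ of $\mathcal{T}_1$ and $\mathcal{T}_2$ whose grid diagrams are $g_1$ and $g_2$ respectively. The combined diagram $g_\#$ obtained from the construction preceding Theorem \ref{thm:main:connected} then represents the Legendrian connected sum $\mathcal{K}_1\#\mathcal{K}_2$, whose transverse pushoff is $\mathcal{T}_1\#\mathcal{T}_2$; thus $g_\#$ is itself a good grid diagram for $\mathcal{T}_1\#\mathcal{T}_2$. Applying Theorem \ref{thm:connected-Legendrian-invariant} to the Legendrian data produces the isomorphism
\[
GH^-(g_1)\otimes GH^-(g_2)\to GH^-(g_\#)
\]
sending $\lambda^+(\mathcal{K}_1)\otimes\lambda^+(\mathcal{K}_2)$ to $\lambda^+(\mathcal{K}_1\#\mathcal{K}_2)$. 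Rewriting this via the identification $\theta^-=\lambda^+$ gives exactly the statement of the corollary.

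There is essentially no hard step here; the only point requiring any care is verifying that the construction of $g_\#$ from $g_1$ and $g_2$ respects Legendrian approximations, so that the Legendrian knot corresponding to $g_\#$ is indeed (Legendrian isotopic to) $\mathcal{K}_1\#\mathcal{K}_2$, and hence that the transverse pushoff realizes $\mathcal{T}_1\#\mathcal{T}_2$. This is immediate from the explicit moves producing $g_\#$, since the stabilizations and column switches used are all standard grid moves whose effect on the canonical generator $x^+$ is precisely what the quasi-isomorphism built in Theorem \ref{thm:main:connected} already tracks.
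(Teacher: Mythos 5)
Your proposal is correct and follows the paper's own approach: the paper simply observes that $\theta^-$ is by definition $\lambda^+$ of a Legendrian approximation, so the corollary is immediate from Theorem \ref{thm:connected-Legendrian-invariant}. Your added remark about checking that $g_\#$ corresponds to a Legendrian approximation of $\mathcal{T}_1\#\mathcal{T}_2$ is a reasonable point of care, but it is already implicit in the hypotheses of the corollary and in the way the paper states Theorem \ref{thm:connected-Legendrian-invariant}.
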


\begin{rem}
\label{rem:additivity-of-the-Legendrian}
The behavior of the Legendrian and transverse invariants under connected sum operations is already known.
V\'{e}rtesi \cite[Corollaries 1.2 and 1.3]{Transversely-nonsimple-knots} showed the additivity of these invariants using knot Floer homology.
To prove it, she compared the cycles of knot Floer complexes corresponding to the canonical cycles of grid complexes.

On the other hand, the present paper gives a purely combinatorial proof of that fact.
This paper enables us to deal with connected sum operations in grid homology at the chain complex level.
\end{rem}

\subsection{Outline of the paper.}
In Section \ref{sec:grid-homology-for-knots}, we review grid homology for knots.
In Section \ref{sec:Legendrian-transverse-knots}, we briefly describe Legendrian and transverse knots and the relationship between them and grid homology.
In Section \ref{sec:preparation-proof}, we prepare some notations and review the destabilization map we will use to prove the main theorems.
In section \ref{sec:proof-of-main-thm}, we construct quasi-isomorphisms and prove Theorem \ref{thm:main:connected}.
In Section \ref{sec:Legendrian-invariant}, we quickly show Theorem \ref{thm:connected-Legendrian-invariant} using the quasi-isomorphisms in Section \ref{sec:proof-of-main-thm}.

\section{The definition of grid homology for knots}
\label{sec:grid-homology-for-knots}
This section quickly provides an overview of grid homology for knots and links.
Referring to \cite[Remark 4.6.13]{grid-book}, we will introduce the hat version without using the minus version.
See \cite{grid-book} for details.

A \textbf{planar grid diagram} $g$ (Figure \ref{fig:state}) is an $n\times n$ grid of squares some of which is decorated with an $X$- or $O$- marking such that it satisfies the following conditions.
\begin{enumerate}[(i)]
\item There is exactly one $O$ and $X$ on each row and column.
\item $O$'s and $X$'s do not share the same square.
\end{enumerate}
We denote the set of $O$-markings by $\mathbb{O}$ and the set of $X$-markings by $\mathbb{X}$.
We often use the labeling of markings as $\{O_i\}_{i=1}^n$ and $\{X_j\}_{j=1}^n$.

A grid diagram determines a link in $S^3$.
Drawing oriented segments from the $O$-markings to the $X$-markings in each row and oriented segments from the $X$-markings to the $O$-marking in each column.
Assume that the vertical segments always cross above the horizontal segments.
Then we can recover the link diagram.
In this paper, we only consider grid diagrams representing knots.

It is known that any two grid diagrams for the same knot can be connected by a finite sequence of moves on grid diagrams called grid moves; see \cite{Cromwell-gridmove,oncombinatorial}.

A \textbf{toroidal grid diagram} is a grid diagram that is regarded as a diagram on the torus obtained by identifying edges in a natural way.
We assume that every toroidal diagram is oriented in a natural way.
We write the horizontal circles and vertical circles which divide the torus into squares as $\boldsymbol{\alpha}=\{\alpha_i\}_{i=1}^n$ and $\boldsymbol{\beta}=\{\beta_j\}_{j=1}^n$ respectively.
A \textbf{state} $\mathbf{x}$ of a toroidal diagram $g$ is a bijection $\boldsymbol{\alpha}\rightarrow\boldsymbol{\beta}$.
We denote by $\s(g)$ the set of states of $g$.
We describe a state as $n$ points on a toroidal grid diagram (figure \ref{fig:state}).
\begin{figure}
\centering
\includegraphics[scale=0.4]{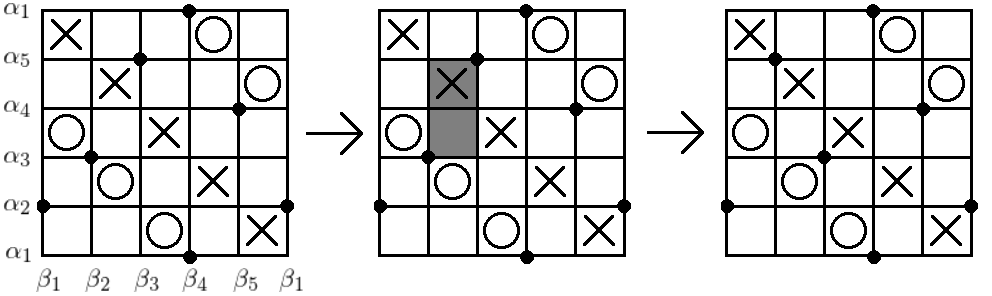}
\caption{an example of states and a rectangle}
\label{fig:state}
\end{figure}
There are $n\times n$ squares on $g$ that is separated by $\boldsymbol{\alpha}\cup\boldsymbol{\beta}$.
Fix $\mathbf{x},\mathbf{y}\in S$(g), a \textbf{domain} $p$ from $\mathbf{x}$ to $\mathbf{y}$ is a formal sum of the closure of squares satisfying $\partial(\partial_\alpha p)=\mathbf{y}-\mathbf{x}$ and $\partial(\partial_\beta p)=\mathbf{x}-\mathbf{y}$, where $\partial_\alpha p$ is the portion of the boundary of $p$ in the horizontal circles $\alpha_1\cup\dots\cup\alpha_n$ and $\partial_\beta p$ is the portion of the boundary of $p$ in the vertical ones.
A domain $p$ is \textbf{positive} if the coefficient of any square is non-negative.
Let $\pi(\mathbf{x},\mathbf{y})$ denote the set of domains from $\mathbf{x}$ to $\mathbf{y}$.
Throughout the paper, we always assume that a domain is positive.
For two domains $p_1\in\pi(\mathbf{x},\mathbf{y})$ and $p_2\in\pi(\mathbf{y},\mathbf{z})$, the \textbf{composite domain} $p_1*p_2$ is the domain from $\mathbf{x}$ to $\mathbf{z}$ such that the coefficient of each square is the sum of the coefficient of the square of $p_1$ and $p_2$.
Consider $\mathbf{x},\mathbf{y}\in \mathbf{S}(g)$ that coincide with $n-2$ points.
An \textbf{rectangle} $r$ from $\mathbf{x}$ to $\mathbf{y}$ is a domain such that $\partial r$ is the union of four segments.
A rectangle $r$ is \textbf{empty} if $\mathbf{x}\cap\mathrm{Int}(r)=\mathbf{y}\cap\mathrm{Int}(r)=\emptyset$.
Let $\mathrm{Rect}^\circ(\mathbf{x},\mathbf{y})$ be the set of empty rectangles from $\mathbf{x}$ to $\mathbf{y}$ if $\mathbf{x}$ and $\mathbf{y}$ coincide with $n-2$ points and $\mathrm{Rect}^\circ(\mathbf{x},\mathbf{y})=\emptyset$ otherwise.

\begin{dfn}
\label{dfn:minus-chain-cpx}
The minus version of \textbf{grid chain complex} $(GC^-(g),\partial^-)$ is the free $\mathbb{F}[U_1,\dots,U_n]$-module generated by $\s(g)$, where $\mathbb{F}=\mathbb{Z}/2\mathbb{Z}$.
The differential $\partial^-$ is the module map defined by for $\mathbf{x}\in\s(g)$,
\[
\partial^-(\mathbf{x})=\sum_{\mathbf{y}\in\s(g)}\left(
\sum_{\{r\in \mathrm{Rect}^\circ(\mathbf{x},\mathbf{y})|r\cap\mathbb{X}=\emptyset\}}
U_1^{O_1(r)}\cdots U_{n}^{O_{n}(r)}
\right)\mathbf{y},
\]
where $O_i(r)=1$ if $r$ contains $O_i$ and $O_i(r)=0$ otherwise for $i=1,\dots,n$.
\end{dfn}

There are two gradings for $GC^-(g)$, the \textbf{Maslov grading} and the \textbf{Alexander grading}.
A \textbf{planar realization} of a toroidal diagram $g$ is a planar figure obtained by cutting toroidal diagram $g$ along $\alpha_i$ and $\beta_j$ for some $i$ and $j$, and putting on $[0,n)\times[0,n)\in\mathbb{R}^2$ in a natural way.
For two points $(a_1,a_2),(b_1,b_2)\subset\mathbb{R}^2$, let $(a_1,a_2)<(b_1,b_2)$ if $a_1<b_1$ and $a_2<b_2$.
For two sets of finitely many points $A,B\subset\mathbb{R}^2$, let $\mathcal{I}(A,B)$ be the number of pairs $a\in A,b\in B$ with $a<b$ and let $\mathcal{J}(A,B)=(\mathcal{I}(A,B)+\mathcal{I}(B,A))/2$.

Then for $\mathbf{x}\in \mathbf{S}(g)$, the Maslov grading $M(\mathbf{x})$ and the Alexander grading $A(\mathbf{x})$ are defined by
\begin{align}
M(\mathbf{x})&=\mathcal{J}(\mathbf{x}-\mathbb{O},\mathbf{x}-\mathbb{O})+1,
\label{mm}
\\
A(\mathbf{x})&=\mathcal{J}(\mathbf{x},\mathbb{X}-\mathbb{O})+\frac{1}{2}\mathcal{J}(\mathbb{O}+\mathbb{X},\mathbb{O}-\mathbb{X})-\frac{n-1}{2}.
\label{aa}
\end{align}
These two gradings are extended to the whole $GC^-(g)$ by
\begin{align}
\label{uu}
M(U_i)=-2,\ A(U_i)=-1\ (i=1,\dots,n-1).
\end{align}
It is known that these two gradings are independent of the choice of the planar realization and that the differential $\partial^--$ drops the Maslov grading by 1 and preserves or drops the Alexander grading.
So $(CF^-(g),\partial^-)$ is an absolute Maslov graded, Alexander graded chain complex \cite[Sections 4.3 and 4.6]{grid-book}.

For any $1\leqq i,j\leqq n$, multiplication by $U_i$ is chain homotopic to multiplication by $U_j$ \cite[Lemma 4.6.9]{grid-book}.
So the minus version of grid homology is defined as follows.
\begin{dfn}
The minus version of \textbf{grid homology} of $g$, denoted $GH^-(g)$, is the homology of $(CF^-(g),\partial^-)$, thought of as a bigraded $\mathbb{F}[U]$-module, where the action of $U$ is induced by multiplication by $U_i$.
\end{dfn}
\begin{thm}[{\cite[Theorem 5.3.1]{grid-book}}]
Let $g_1$ and $g_2$ be two grid diagrams representing the same knot $K$.
Then $GH^-(g_1)\cong GH^-(g_2)$ as bigraded $\mathbb{F}[U]$-modules.
\end{thm}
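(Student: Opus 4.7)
The plan is to invoke the theorem of Cromwell and Manolescu--Ozsv\'{a}th--Sarkar--Thurston that any two grid diagrams representing the same knot are related by a finite sequence of three elementary moves---cyclic permutation, commutation, and (de-)stabilization---and then verify that the bigraded $\mathbb{F}[U]$-module $GH^{-}(g)$ is left unchanged up to bigraded isomorphism under each of these moves.

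Cyclic permutation is essentially formal: since $GC^{-}(g)$ is defined using states, rectangles, and markings on the torus, a cyclic permutation merely selects a different planar representative of the same toroidal diagram, and the identity map on the underlying toroidal data induces an isomorphism of chain complexes. The one thing to check is that the Maslov and Alexander gradings defined in \eqref{mm} and \eqref{aa} are independent of the choice of planar realization, which is already asserted in the text.

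Commutation invariance is proved by a pentagon-counting argument. Overlaying two commuting diagrams $g$ and $g'$ produces a single toroidal picture in which the distinguished $\beta$-circles agree except on two adjacent columns, where they meet at two points. One defines chain maps $P\colon GC^{-}(g)\to GC^{-}(g')$ and $P'\colon GC^{-}(g')\to GC^{-}(g)$ by counting empty pentagons with a vertex at one of these intersection points, weighted by $U_1^{O_1(p)}\cdots U_n^{O_n(p)}$ and forbidding $X$-markings. One then checks $\partial P+P\partial=0$ and $P'P-\id=\partial H+H\partial$ (and similarly for $PP'$) by exhibiting $H$ as a count of empty hexagons and pairing up degenerations of domains into rectangle/pentagon composites so that the total contribution vanishes mod $2$.

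The main obstacle is stabilization invariance. Let $g'$ be an $(n+1)\times(n+1)$ stabilization of the $n\times n$ diagram $g$, and denote the newly added markings by $X_1,O_1,O_{n+1}$, with a canonical corner $c$ of the $2\times 2$ stabilization region. The states of $g'$ split into two sets $\mathbf{I}$ and $\mathbf{N}$ according to whether or not they contain $c$; the submodule generated by $\mathbf{I}$ is canonically identified with $GC^{-}(g)\otimes\mathbb{F}[U_{n+1}]$ by forgetting the new coordinate. One filters $GC^{-}(g')$ by a suitable length function so that the associated graded object becomes the mapping cone
\[
\mathrm{Cone}\Bigl(U_1-U_{n+1}\colon GC^{-}(g)\otimes\mathbb{F}[U_{n+1}]\longrightarrow GC^{-}(g)\otimes\mathbb{F}[U_{n+1}]\Bigr),
\]
with an appropriate bigrading shift. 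Because multiplication by $U_1$ and by $U_{n+1}$ are chain homotopic on $GC^{-}(g)$ and $U_{n+1}-U_1$ acts without torsion in the relevant sense, the homology of this cone is $GC^{-}(g)/(U_1-U_{n+1})\simeq GC^{-}(g)$ as a bigraded $\mathbb{F}[U]$-module, which yields the desired isomorphism. The delicate points---where I expect the real work to lie---are verifying that the local rectangles across the stabilization region do assemble the differential into exactly this cone shape, and tracking the Maslov/Alexander shifts introduced by the new $O$- and $X$-markings so that the identification is bigraded over $\mathbb{F}[U]$ as claimed.
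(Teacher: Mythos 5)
The paper does not prove this theorem; it cites it directly as \cite[Theorem~5.3.1]{grid-book}, so there is no internal proof to compare against. Your sketch follows the structure of the standard textbook proof (Cromwell plus invariance under cyclic permutation, commutation, and stabilization), and the outline is essentially correct, but the stabilization step is misstated in a way that papers over the real content. The splitting of $\s(g')$ into $\mathbf{I}$ and $\mathbf{N}$ does not merely produce a filtration whose associated graded ``becomes'' $\mathrm{Cone}(U_1 - U_{n+1})$; rather, since $\mathbf{I}$ is a subcomplex, $GC^-(g')$ is \emph{already} the mapping cone $\mathrm{Cone}\bigl(\partial_\mathbf{N}^\mathbf{I}\colon (\mathbf{N},\partial_\mathbf{N}^\mathbf{N}) \to (\mathbf{I},\partial_\mathbf{I}^\mathbf{I})\bigr)$, and the work is to produce an explicit chain homotopy equivalence between this cone and $\mathrm{Cone}(U_1 - U_2)$. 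That equivalence is the destabilization map of Section~\ref{subsed:stabi-destabi}, whose off-diagonal components count rectangles through $O_1$ \emph{and} empty hexagons, as in the maps $\mathcal{H}_{O_1}$ and $\mathcal{H}_{Hex}$; ``local rectangles across the stabilization region'' alone do not assemble into a chain map to the simpler cone. An associated-graded argument would only give a spectral sequence comparison and would require extra justification to conclude the desired bigraded $\mathbb{F}[U]$-module isomorphism on homology, whereas the textbook construction gives a chain-level homotopy equivalence outright; this is precisely the machinery the present paper reuses in its proof of Theorem~\ref{thm:main:connected}.
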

We will denote by $GH^-(K)$ if there is no confusion.

\section{Legendrian and transverse knots in grid homology}
\label{sec:Legendrian-transverse-knots}
\subsection{Legendrian and transverse knots}
We briefly review the Legendrian and transverse knot and the Legendrian and transverse invariants with grid homology.
See \cite{Legendrian-and-transversal-knots,grid-Legendrianknot} for details.

In this paper, a \textbf{Legendrian knot} is a smooth knot $\mathcal{K}\subset\mathbb{R}^3$ whose tangent vectors are contained in the contact planes of $\mathrm{Ker}\,\xi_{\mathrm{std}}$, where $\xi_{\mathrm{std}}=dz-y\, dx$.
Two knots are Legendrian isotopic if they are connected by a smooth one-parameter family of Legendrian knots.
There are two classical invariants of a Legendrian knot $\mathcal{K}$; the \textbf{Thurston-Bennequin number} $\mathrm{tb}(\mathcal{K})$ and the \textbf{rotation number} $\mathrm{r}(\mathcal{K})$.
See for \cite{Legendrian-and-transversal-knots} for the definition.
We can realize these invariants using grid diagrams \cite[Theorem 1.1]{grid-Legendrianknot}.
Legendrian knots are studied via their front projections to the $xz$-plane by the projection map $(x,y,z)\mapsto(x,z)$.
The front projection of a Legendrian knot has no vertical tangencies and in the generic case, its singularities are double points and cusps.
A \textbf{transverse knot} $\mathcal{T}$ is a knot whose tangent vectors are transverse to the contact planes of $\mathrm{Ker}\,\xi_{\mathrm{std}}$.
Two transverse knots are transverse isotopic if they are connected by a smooth one-parameter family of transverse knots.
Given a Legendrian knot $\mathcal{K}$, there are transverse knots $\mathcal{T}$ that are arbitrarily close to $\mathcal{K}$ (in $C^1$ topology).
Furthermore, there is a neighborhood of $\mathcal{K}$ such that any two transverse knots in this neighborhood are transverse isotopic.
The \textbf{transverse push-off} $\mathcal{T}(\mathcal{K})$ of $\mathcal{K}$ is the transverse knot type which can be represented by a transverse knot arbitrarily close to $\mathcal{K}$.
Any transverse knot $\mathcal{T}$ is transversely isotopic to the transverse push-off of some Legendrian knot $\mathcal{K}$.
Then $\mathcal{K}$ is called \textbf{Legendrian approximation} of $\mathcal{T}$.

\begin{figure}
\centering
\includegraphics[scale=0.45]{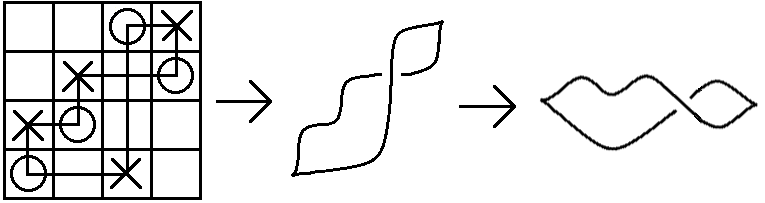}
\caption{The Legendrian knot determined by a grid diagram}
\label{fig:frontprojection}
\end{figure}

\subsection{the Legendrian and transverse grid invariant}
There is an algorithm for giving the front projection of a Legendrian knot from a (planar) grid diagram (Figure \ref{fig:frontprojection}. See \cite[Section 4]{grid-Legendrianknot} and \cite[Section 12.2]{grid-book} for details).
Let $g$ be a planar grid diagram for a knot $K$.
Connect each marking of $g$ by horizontal and vertical segments to obtain a diagram of $K$ on $g$.
Rotate $g$ by $45^\circ$ clockwise.
Turn each corner of the diagram of $K$ on $g$ into cusps or smooth arcs.
Switch all crossings of the diagram.
Then we get the front projection of the Legendrian knot whose topological type is $m(K)$.
We say that a grid diagram $g$ represents the Legendrian knot $\mathcal{K}$ if $\mathcal{K}$ is obtained from $g$ by the above procedure.
Furthermore, considering the transverse push-off of $\mathcal{K}$, a grid diagram uniquely specified a transverse knot.
Any transverse knot is represented by grid diagrams by taking its Legendrian approximation.

In \cite{grid-Legendrianknot}, Ozsv\'{a}th, Szab\'{o}, and Thurston defined \textbf{Legendrian and transverse grid invariants} using grid homology.
For a grid diagram $g$, let $\mathbf{x}^+(g)$ be the canonical state consisting of the northeast corners of squares decorated by $X$.
Similarly, let $\mathbf{x}^-(g)$ be the canonical state consisting of the southwest corners of the same squares.

\begin{thm}[{\cite[Theorem 1.1]{grid-Legendrianknot}}]
\label{thm:invariance-lambda}
Let $\mathcal{K}$ be a Legendrian knot represented by a grid diagram $g$.
The homology classes in $GH^-(g)$ represented by $\mathbf{x}^+(g)$ and $\mathbf{x}^-(g)$, denoted by $\lambda^+(g)$ and $\lambda^-(g)$ respectively, are invariants of Legendrian knots; i.e., if $g$ and $g'$ represent Legendrian isotopic knots, then there is a quasi-isomorphism
\[
\Phi\colon GC^-(g)\to GC^-(g')
\]
such that $\Phi(\lambda^+(g))=\lambda^+(g')$ and $\Phi(\lambda^-(g))=\lambda^-(g')$.
\end{thm}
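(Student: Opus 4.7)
The plan is to reduce Legendrian isotopy invariance to invariance under an explicit short list of grid moves and then verify preservation of the canonical cycles on each move. First, I would invoke the standard classification of grid diagrams up to Legendrian isotopy: two grid diagrams represent Legendrian isotopic knots if and only if they are connected by a finite sequence of commutations and (de)stabilizations of the two $X$-types that do not alter the Thurston--Bennequin and rotation numbers, namely $X{:}\mathrm{NW}$ and $X{:}\mathrm{SE}$. The remaining stabilization types correspond to genuine Legendrian (de)stabilizations, so any chain-level equivalence built from them need not, and in general will not, preserve $\lambda^\pm$; this is exactly why the theorem statement is restricted to Legendrian isotopy.

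For each commutation move, I would use the standard pentagon-counting chain map $P\colon GC^-(g)\to GC^-(g')$ defined by counting empty pentagons on the torus that avoid $X$-markings; this is already a known quasi-isomorphism. To show $P(\mathbf{x}^+(g))=\mathbf{x}^+(g')$, one inspects the two rows or columns that participate in the commutation: away from that strip the northeast corners of $X$-decorated squares agree in $g$ and in $g'$, and inside the strip there is a unique empty pentagon connecting the respective northeast corners, contributing exactly $\mathbf{x}^+(g')$ with no $X$- or $O$-weight. The argument for $\mathbf{x}^-$ is symmetric, using southwest corners and the mirror pentagon map.

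For the two allowed stabilization types I would appeal to the destabilization chain homotopy equivalence from \cite[Chapter 13]{grid-book}, which presents $GC^-(g')$ as a mapping cone of a map between two explicit subcomplexes and produces an explicit quasi-isomorphism to $GC^-(g)$. The main step, and the one I expect to be the principal obstacle, is to verify that this equivalence identifies $\mathbf{x}^+(g')$ with $\mathbf{x}^+(g)$. The canonical state of the stabilized diagram splits naturally as the canonical state of $g$ together with a single new point forced by the extra row and column, and I would trace this new point through the filtration defining the mapping cone to check that it lies in the ``acyclic'' summand; passing to homology then sends $[\mathbf{x}^+(g')]$ to $[\mathbf{x}^+(g)]$. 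The same argument handles $\mathbf{x}^-$, and inspection of the other $X$- and $O$-stabilization types shows that the new canonical point there lands in a summand that contributes non-trivially, which is consistent with those moves realizing Legendrian (de)stabilization rather than Legendrian isotopy.
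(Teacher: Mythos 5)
This statement is not proved in the present paper: it is quoted verbatim as \cite[Theorem~1.1]{grid-Legendrianknot} and then used as a black box in the proof of Theorem~\ref{thm:connected-Legendrian-invariant}, so there is no in-paper argument to compare against. Taken as a reconstruction of the proof in the cited reference (and in \cite[Chapter~13]{grid-book}), your sketch has the right overall shape: reduce to the generating set of Legendrian-preserving grid moves (commutations together with $(\mathrm{de})$stabilizations of types $X{:}\mathrm{NW}$ and $X{:}\mathrm{SE}$), use the pentagon-counting quasi-isomorphism for commutations, and use the destabilization chain homotopy equivalence for the two allowed stabilization types, checking on each move that the canonical cycles go to canonical cycles.

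Two details to tighten if you were to carry this out. First, the theorem asks for one quasi-isomorphism $\Phi$ per move carrying \emph{both} $\lambda^+$ and $\lambda^-$; invoking a separate ``mirror'' pentagon map for $\mathbf{x}^-$ would produce a different chain map, so you must check that a single choice of pentagon map does both jobs (or explain why the two choices agree at the level needed). Second, for the $X{:}\mathrm{NW}$ and $X{:}\mathrm{SE}$ stabilizations the assertion that ``the new point lies in the acyclic summand'' is not quite how the bookkeeping goes: the relevant fact, in the notation of Section~\ref{subsed:stabi-destabi}, is that $\mathbf{x}^\pm(g')$ lies in the subcomplex $\mathbf{I}$ of $GC^-(g')$, and the isomorphism $e$ of Lemma~\ref{lem:e:I-C} carries it on the nose to $\mathbf{x}^\pm(g)$, so that the composed destabilization quasi-isomorphism preserves the canonical generators at the chain level, not merely up to homology. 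These are matters of precision rather than gaps; the structure of the argument is the standard one.
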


\begin{thm}[{\cite[Corollary 1.4]{grid-Legendrianknot}}]
\label{thm:invariance-theta}
Let $\mathcal{K}$ be a Legendrian knot represented by a grid diagram $g$, and $\mathcal{T}$ be its transverse push-off.
The homology class $\lambda^+(g)\in GH^-(g)$, denoted by $\theta(\mathcal{T})$, is an invariant of transverse knot; i.e., if $g$ and $g'$ represent two Legendrian approximations to $\mathcal{T}$, then there is a quasi-isomorphism
\[
\Psi\colon GC^-(g)\to GC^-(g')
\]
such that $\Psi(\theta(\mathcal{T}))=\theta(\mathcal{T'})$.
\end{thm}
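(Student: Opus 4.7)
The plan is to reduce the statement to Theorem \ref{thm:invariance-lambda} together with the classical transverse Markov theorem (due to Orevkov--Shevchishin and Wrinkle), which asserts that two Legendrian knots $\mathcal{K}$ and $\mathcal{K}'$ have transversely isotopic transverse push-offs if and only if they are related by a finite sequence of Legendrian isotopies and \emph{negative} Legendrian stabilizations. Given this, Theorem \ref{thm:invariance-lambda} already produces a quasi-isomorphism preserving $\lambda^+$ along each Legendrian isotopy step. Hence the only nontrivial content is to construct, for a single negative Legendrian stabilization $\mathcal{K} \mapsto \mathcal{K}'$ with representing grid diagrams $g$ and $g'$, a quasi-isomorphism $\Psi \colon GC^-(g) \to GC^-(g')$ with $\Psi(\lambda^+(g)) = \lambda^+(g')$.

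First I would translate the negative Legendrian stabilization into the grid-move language. Using the $45^\circ$-rotation recipe recalled in Section \ref{sec:Legendrian-transverse-knots}, the four types of grid stabilization (labeled by which corner of the new $2{\times}2$ block receives the duplicate marking) correspond to the four combinations of positive/negative Legendrian stabilization and the two possible orientation conventions. I would isolate the one type (an $X$-stabilization at the appropriate corner, say type $X{:}\mathrm{SW}$ or $X{:}\mathrm{NE}$ depending on conventions) that encodes a negative Legendrian stabilization, and denote the corresponding stabilized grid diagram by $g'$.

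Next I would build $\Psi$ using the standard mapping cone / destabilization model from \cite{grid-book}, Chapter~13: decompose $GC^-(g')$ as a mapping cone of subcomplexes determined by whether the state occupies the distinguished corner of the new $2{\times}2$ block, and exhibit an explicit chain equivalence with a localization of $GC^-(g)$. This produces a quasi-isomorphism in the required direction (possibly after inverting, since the destabilization is naturally stated as a map $GC^-(g') \to GC^-(g)$ and one takes a homotopy inverse). Finally I would trace the canonical generator: because $\mathbf{x}^+(g)$ is supported on the NE-corners of the $X$-marked squares, one directly verifies that under the negative stabilization the NE-corner at the new $X$-marking extends $\mathbf{x}^+(g)$ to $\mathbf{x}^+(g')$, and that this identification intertwines with $\Psi$ up to a boundary. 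The corresponding statement for transverse invariance then follows by concatenating these elementary quasi-isomorphisms.

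The main obstacle I expect is the bookkeeping at the last step: keeping track of which corner of the newly introduced $X$-marking is occupied by the canonical state, and showing that precisely the \emph{negative}-stabilization variant produces a destabilization map whose action on $\lambda^+$ is the identity (while the \emph{positive}-stabilization variant instead sends $\lambda^+$ to a class involving $U$-multiplication, breaking invariance of $\theta$). This asymmetry between $\lambda^+$ and $\lambda^-$ under the two stabilization types is what ultimately forces $\theta$ to be a transverse rather than a Legendrian invariant, and verifying it requires a careful inspection of the empty rectangles contributing to the destabilization homotopy.
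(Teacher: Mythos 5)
This statement is a quoted background result, cited as \cite[Corollary 1.4]{grid-Legendrianknot}; the paper offers no proof of it, so there is nothing internal to compare against. Your sketch is essentially a reconstruction of the argument from the cited reference (Ozsv\'ath--Szab\'o--Thurston), and the route you take is the standard one: use the transverse Markov theorem to reduce transverse isotopy to Legendrian isotopy together with negative Legendrian stabilizations, handle the Legendrian-isotopy steps via Theorem \ref{thm:invariance-lambda}, translate a negative Legendrian stabilization into a grid stabilization of the appropriate type, and then trace $\mathbf{x}^+$ through the mapping-cone model for the (de)stabilization map. Two small corrections: the mapping-cone destabilization machinery lives in Section 5.7 (not Chapter 13) of \cite{grid-book}, and you should pin down which of the four grid-stabilization types realizes a negative Legendrian stabilization rather than leaving it as ``$X{:}\mathrm{SW}$ or $X{:}\mathrm{NE}$''---the recipe in Section \ref{sec:Legendrian-transverse-knots} determines this unambiguously, and the distinction is exactly what forces $\lambda^+$ (and not $\lambda^-$) to survive negative stabilization. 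Your closing remark about the asymmetry between $\lambda^+$ and $\lambda^-$ under the two stabilization signs---and that this is why $\theta$ is a transverse rather than Legendrian invariant---is the right conceptual point and matches \cite[Theorem 1.2]{grid-Legendrianknot}.
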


\section{Preparations of the proof of the main theorem}
\label{sec:preparation-proof}

\subsection{The classification of the states}
\label{subsec:states-of-g}
The basic idea is the same as \cite[Section 5.1]{Grid-homology-for-spatial-graphs-and-a-Kunneth-formula-of-connected-sum}.
Let $\mathbf{g}_{11}=(\alpha_{n+1}\cup\dots\cup\alpha_{2n})\cap(\beta_1\cup\dots\cup\beta_n)$ be the $n^2$ points in $g_{11}$.
Similarly, define $\mathbf{g}_{12}=(\alpha_{n+1}\cup\dots\cup\alpha_{2n})\cap(\beta_{n+1}\cup\dots\cup\beta_{2n})$, $\mathbf{g}_{21}=(\alpha_1\cup\dots\cup\alpha_n)\cap(\beta_1\cup\dots\cup\beta_n)$, and $\mathbf{g}_{22}=(\alpha_1\cup\dots\cup\alpha_n)\cap(\beta_{n+1}\cup\dots\cup\beta_{2n})$ similarly. 
For a state $\mathbf{x}\in\s(g_{\#})$ and $i,j\in\{1,2\}$, let $\mathbf{x}_{ij}=\mathbf{x}\cap\mathbf{g}_{ij}$.
We will write each state $\mathbf{x}\in \mathbf{S}(g_{\#})$ uniquely as
\[
\mathbf{x}=
\begin{pmatrix}
\mathbf{x}_{11} & \mathbf{x}_{12} \\
\mathbf{x}_{21} & \mathbf{x}_{22} \\
\end{pmatrix}
,
\]
such that $\#\mathbf{x}_{11}=\#\mathbf{x}_{22}$ and $\#\mathbf{x}_{12}=\#\mathbf{x}_{21}$.
Then for $k=0,\dots,n$, let $\s_k(g_{\#})$ be the subset of $\s(g_{\#})$ defined by
\[
\s_k(g_{\#}) =\{\mathbf{x}\in\mathbf{S}(g_{\#})|\#\mathbf{x}_{12}=\#\mathbf{x}_{21}=k\}
\]
for $k=0,\dots,n$.

Take four points $a=\alpha_{1}\cap\beta_{1}, b=\alpha_{1}\cap\beta_{n+1}, c=\alpha_{n+1}\cap\beta_{1}$, and $d=\alpha_{n+1}\cap\beta_{n+1}$ on $g_\#$.
Let $\mathbf{AD}_1(g_\#)$ and $\mathbf{BC}_0(g_\#)$ be the subset of $\s_1(g_\#)$ and $\s_0(g_\#)$ defined by
\begin{align*}
\mathbf{AD}_1(g_\#) &=\{\mathbf{x}\in\s_1(g_\#)|a,d\in\mathbf{x}\},\\
\mathbf{BC}_0(g_\#) &=\{\mathbf{x}\in\s_0(g_\#)|b,c\in\mathbf{x}\}.
\end{align*}

\subsection{Stabilization and destabilization maps}
\label{subsed:stabi-destabi}
This section describes chain homotopy equivalence corresponding to destabilization maps constructed by \cite[Section 5.7]{grid-book}.
We will discuss the cases of destabilization of type $X\colon SE$ and $X\colon NW$.

Let $g'$ be an $n\times n$ grid diagram obtained from $g$ by stabilization of type $X\colon SE$.
Number the $O$-markings in $g'$ such that $O_1$ is in the new column and $O_2$ is in the next column (Figure 3).
Let $X_1$ be the $X$-marking in the same row as $O_1$ and $X_2$ be the $X$-marking in the same column as $O_1$.
Then we think that $GC^-(g)$ is the chain complex over $\mathbb{F}[U_2,\dots,U_n]$ and $GC^-(g')$ is over $\mathbb{F}[U_1,\dots,U_n]$.
\begin{figure}
    \centering
    \includegraphics[scale=0.7]{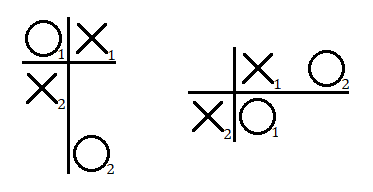}
    \caption{Left: the labeling for the case $X\colon SE$. Right: the labeling for the case $X\colon NW$.}
    \label{fig:Label-stabi}
\end{figure}

Take a point $c$ on $g'$ as the southeast corner of the square containing $O_1$.
Let $\mathbf{I}(g')$ be the set of states containing $c$ and $\mathbf{N}(g')$ be $\s(g')\setminus\mathbf{I}(g')$.
Consider the corresponding $\mathbb{F}[U_1,\dots,U_n]$-module splitting $GC^-(g')\cong \mathbf{I}\oplus\mathbf{N}$, where $\mathbf{I}$ and $\mathbf{N}$ are the submodule generated by states of $\mathbf{I}(g')$ and $\mathbf{N}(g')$ respectively.
Because of two $X$-markings adjacent to $O_1$, $\mathbf{I}$ is the subcomplex of $GC^-(g')$.
Thus we can write the differential of $GC^-(g')$ as a matrix
\[
\partial^-=\begin{pmatrix}
    \partial_\mathbf{I}^\mathbf{I} & \partial_\mathbf{N}^\mathbf{I}  \\
    0 & \partial_\mathbf{N}^\mathbf{N},
\end{pmatrix}
\]
where, for example, $\partial_\mathbf{I}^\mathbf{I}$ is the chain map counting the rectangles from a state of $\mathbf{I}(g')$ to a state of $\mathbf{I}(g')$.
Then $GC^-(g')$ is the mapping cone $\mathrm{Cone}(\partial_\mathbf{N}^\mathbf{I}\colon(\mathbf{N},\partial_\mathbf{N}^\mathbf{N})\to(\mathbf{I},\partial_\mathbf{I}^\mathbf{I}))$.

\begin{lem}[{\cite[Lemma 5.2.18]{grid-book}}]
\label{lem:e:I-C}
The natural correspondence between the state of $GC^-(g)$ and $\mathbf{I}(g')$ induces an isomorphism $e\colon (\mathbf{I},\partial_\mathbf{I}^\mathbf{I})\to GC^-(g)[U_1]$ of bigraded chain complexes over $\mathbb{F}[U_1,\dots,U_n]$.
\end{lem}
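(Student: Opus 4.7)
The plan is to verify three properties of the map $e$ arising from the set-theoretic bijection $\mathbf{I}(g')\leftrightarrow\mathbf{S}(g)$ defined by $\mathbf{x}\mapsto\mathbf{x}\setminus\{c\}$: that $e$ is an $\mathbb{F}[U_1,\dots,U_n]$-module isomorphism, that it preserves both the Maslov and Alexander gradings, and that it intertwines $\partial_\mathbf{I}^\mathbf{I}$ with the differential on $GC^-(g)[U_1]$. The module-isomorphism part is immediate once the bijection is set up, since any state of $g'$ containing $c$ has exactly one point on each of the $n-1$ horizontal (respectively vertical) circles other than the one carrying $c$, so collapsing the new row and column identifies those $n-1$ points with a state of $g$; extending $\mathbb{F}[U_1,\dots,U_n]$-linearly yields the required module isomorphism.

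For the bigrading, I would expand $\mathcal{J}(\mathbf{x}-\mathbb{O}_{g'},\,\mathbf{x}-\mathbb{O}_{g'})$ by writing $\mathbf{x}=\mathbf{x}'+c$ and $\mathbb{O}_{g'}=\mathbb{O}_g+O_1$, isolating the cross term $2\mathcal{J}(\mathbf{x}'-\mathbb{O}_g,\,c-O_1)$ and the diagonal term $\mathcal{J}(c-O_1,\,c-O_1)$. Because $c$ and $O_1$ share a column (so neither is $<$ the other), the diagonal term vanishes, and a local computation using only the positions of $c$ and $O_1$ in the stabilization block shows that the cross term also vanishes for every $\mathbf{x}'$. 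Hence $M_{g'}(\mathbf{x})=M_g(e(\mathbf{x}))$. The same expansion applied to the Alexander-grading formula, combined with the change in the normalization term and the local contribution of $\mathcal{J}(\mathbb{O}+\mathbb{X},\mathbb{O}-\mathbb{X})$ from the two added markings in the stabilization block, cancels to give $A_{g'}(\mathbf{x})=A_g(e(\mathbf{x}))$.

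For the chain-map property, the key geometric claim is that empty $\mathbb{X}$-avoiding rectangles between states of $\mathbf{I}(g')$ correspond bijectively to empty $\mathbb{X}$-avoiding rectangles in $g$, with matching $O$-contents in $U_2,\dots,U_n$. Since $c$ lies in both $\mathbf{x}$ and $\mathbf{y}$, it cannot be a corner of any $r\in\mathrm{Rect}^\circ(\mathbf{x},\mathbf{y})$; moreover, no side of $r$ can lie on either of the two circles through $c$, since $c$ is the unique state point on each. Combined with emptiness (which forbids $c$ in the interior of $r$), a short case analysis then shows that $r$ also cannot cover $O_1$, for otherwise its $\alpha$- and $\beta$-ranges would have to contain both circles through $c$, forcing $c$ into the interior. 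Hence $r$ descends unambiguously to an empty rectangle in $g$ with the same $O$-content, and every empty rectangle in $g$ lifts uniquely.

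The main obstacle will be this rectangle case analysis, and specifically ruling out ``wrap-around'' rectangles whose interior crosses the new row or new column without containing $c$. For each such rectangle, the placement of the two $X$-markings $X_1, X_2$ adjacent to $O_1$ must force it to contain $X_1$ or $X_2$, so that it is excluded by the $\mathbb{X}$-avoidance condition. This is the same local argument that already makes $\mathbf{I}$ a subcomplex of $GC^-(g')$, and once it is carried out the differentials agree termwise under $e$, completing the proof.
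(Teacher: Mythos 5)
The paper does not actually prove this statement --- it is quoted verbatim as \cite[Lemma 5.2.18]{grid-book} and used as a black box, so there is no proof in the paper to compare against. Your sketch is a reasonable reconstruction of the grid-book argument (state bijection, direct grading computation via $\mathcal{J}$, rectangle correspondence), and its overall structure would go through, but two of the justifications are off.

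On the grading, the diagonal term $\mathcal{J}(c-O_1,\,c-O_1)$ does vanish, but not because ``$c$ and $O_1$ share a column.'' Since $c$ is the southeast corner of $O_1$'s square, $c$ has strictly larger $x$-coordinate and strictly smaller $y$-coordinate than the marking $O_1$, so they are NW--SE to one another and neither satisfies $<$; that is the correct reason. You should also record that the destabilization gives an order-preserving bijection between the $\alpha,\beta$ circles of $g'$ not through $c$ and those of $g$, so that the remaining $\mathcal{J}$-count of $\mathbf{x}'-\mathbb{O}'$ can legitimately be read either in $g'$ or in $g$.

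On the rectangle correspondence, the ``main obstacle'' you flag is not actually an obstacle, and the fix you propose does not apply. The observation you already made --- the two sides of $r$ miss both circles through $c$, and emptiness forbids $c\in\mathrm{Int}(r)$ --- rules out $r$ containing \emph{any} of the four squares of the stabilization block, not just $O_1$'s, because $c$ is the common vertex of all four and a product of cyclic arcs covering one of those squares without having $c$'s circles as sides must have $c$ in its interior. This reasoning already covers wrap-around rectangles, since cyclic intervals wrap. A rectangle whose range merely crosses the new row or column away from the block is not to be \emph{excluded}; it descends to an honest rectangle of $g$ with one fewer row or column, which is exactly what you want. The $\mathbb{X}$-avoidance/$X_1,X_2$ argument you invoke is the one showing that $\mathbf{I}$ is a subcomplex --- i.e.\ that a rectangle with $c$ as a corner must hit $X_1$ or $X_2$ --- and it is irrelevant once $c\in\mathbf{x}\cap\mathbf{y}$. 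With these corrections the plan is sound.
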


\begin{dfn}[{\cite[Definition 5.7.1]{grid-book}}]
An \textbf{empty hexagon} is a domain $h\in\pi(\mathbf{x},\mathbf{y})$ with $\mathbf{x}\in\mathbf{I}(g')$ and $\mathbf{y}\in\mathbf{N}(g')$ satisfying followings:
\begin{itemize}
    \item the local multiplicities of $h$ are all $0$ or $1$,
    \item $h$ has six corners, five $90^\circ$ corners and one $270^\circ$ corner which is at $c$,
    \item $\mathbf{x}\cap \mathrm{Int}(h)=\emptyset$.
\end{itemize}
We denote by $\mathrm{Hex}(\mathbf{x},\mathbf{y})$ the set of empty hexagons from $\mathbf{x}$ to $\mathbf{y}$.
\begin{figure}
    \centering
    \includegraphics[scale=0.7]{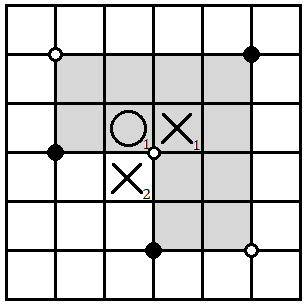}
    \caption{An empty hexagon counted by $\mathcal{H}_{Hex}$.}
    \label{fig:hexagon}
\end{figure}
\end{dfn}
Figure \ref{fig:hexagon} shows an example of an empty hexagon.

Define $\mathbb{F}[U_1,\dots,U_n]$-module maps $\mathcal{H}_{O_1}, \mathcal{H}_{Hex}\colon \mathbf{N}\to\mathbf{I}$with
\begin{align*}
\mathcal{H}_{O_1}(\mathbf{x}) &=\sum_{\mathbf{y}\in\mathbf{I}(g')}\left( \sum_{\{r\in\mathrm{Rect}^\circ(\mathbf{x},\mathbf{y})|O_1\in r,r\cap\mathbb{X}=\emptyset\}}U_2^{O_2(r)}\cdots U_n^{O_n(r)} \right)\mathbf{y}, \\
\mathcal{H}_{Hex}(\mathbf{x}) &=\sum_{\mathbf{y}\in\mathbf{I}(g')}\left( \sum_{\{r\in\mathrm{hex}(\mathbf{x},\mathbf{y})|O_1\in h,h\cap\mathbb{X}=\{X_1\}\}}U_2^{O_2(r)}\cdots U_n^{O_n(r)} \right)\mathbf{y}.
\end{align*}

\begin{dfn}
The \textbf{destabilization map} $\mathcal{D}_{SE}\colon GC^-(g') \to \mathrm{Cone}(U_1-U_2\colon GC^-(g)[U_1]\to GC^-(g)[U_1])$ of type $X\colon SE$ is defined by the following diagram:
\begin{equation}
\label{diagram:desta}
\xymatrix@=40pt{
\mathbf{N} \ar[r]^-{\partial_\mathbf{N}^\mathbf{I}} \ar[d]^-{e\circ \mathcal{H}_{O_1}} \ar[dr]^-{e\circ \mathcal{H}_{Hex}} & \mathbf{I} \ar[d]^-{e} \\
GC^-(g)[U_1] \ar[r]^-{U_1-U_2} & GC^-(g)[U_1]
}
\end{equation}
\end{dfn}
\begin{rem}
\begin{itemize}
    \item In \cite[Section5.7]{grid-book}, the stabilization map $\mathcal{S}_{SE}$ is defined, and  $\mathcal{D}_{SE}$ and $\mathcal{S}_{SE}$ are chain homotopy inverses to each other.
    See \cite[Exercise 5.7.2]{grid-book} for detailed discussion.
    \item In the above definitions, $U_1-U_2$ comes from the fact that $O_1$ and $O_2$ are in the same row and column respectively as $X_2$.
\end{itemize}
\end{rem}

For the case of destabilization of type $X\colon NW$, label the markings so that $O_1$ is in the stabilized $2\times 2$ block, $X_1$ is in the same column as $O_1$, $X_2$ in the same row as $O_1$, and $O_2$ is in the same row as $X_1$.
Then define $\mathcal{H}_{O_1}$ and $\mathcal{H}_{Hex}$ by the same equations above.
We will define stabilization and destabilization maps using these maps.
Let $\mathcal{D}_{NW}\colon GC^-(g') \to \mathrm{Cone}(U_1-U_2\colon GC^-(g)[U_1]\to GC^-(g)[U_1])$ be the destabilized map.

\section{The proof of the main theorems}
\label{sec:proof-of-main-thm}
\subsection{The subcomplex \texorpdfstring{$C$}{C} of \texorpdfstring{$GC^-(g_\#)$}{GC-(g)}}
We will construct the subcomplex $C$ of $GC^-(g_\#)$ stated in Theorem \ref{thm:main:connected} and prove that the inclusion $C\to GC^-(g_\#)$ is a quasi-isomorphism.
The key idea is that the quotient complex $GC^-(g_\#)/C$ consists of many acyclic parts in some sense.
This idea is almost the same as \cite[Lemmas 5.6 and 5.7]{Grid-homology-for-spatial-graphs-and-a-Kunneth-formula-of-connected-sum}.

\begin{dfn}
\label{dfn:C}
Let $C$ be the free $\mathbb{F}[U_1,\dots,U_{2n}]$-module generated by $\mathbf{AD}_1(g_\#)\cup\s_0(g_\#)$.
\end{dfn}

\begin{prop}
\label{prop:GC/C-is-acyclic}
$GC^-(g_\#)/C$ is acyclic.
\end{prop}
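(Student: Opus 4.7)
The quotient complex $GC^-(g_\#)/C$ is freely generated over $\mathbb{F}[U_1,\dots,U_{2n}]$ by the states $\mathbf{x}$ lying in $\bigsqcup_{k \geq 1} \s_k(g_\#)$ with $\mathbf{AD}_1(g_\#)$ removed. My plan is to establish acyclicity by introducing a filtration on $GC^-(g_\#)/C$ whose associated graded pieces each admit an explicit nullhomotopy coming from a canonical ``swap'' near the corner points $a,b,c,d$, following the template of \cite[Lemmas~5.6 and 5.7]{Grid-homology-for-spatial-graphs-and-a-Kunneth-formula-of-connected-sum}.

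The first step is to observe that any empty rectangle changes a state at exactly two points, so the integer $k = \#\mathbf{x}_{12} = \#\mathbf{x}_{21}$ can shift by at most one under $\partial^-$. Filtering $GC^-(g_\#)/C$ by $k$ (suitably refined so that $\partial^-$ becomes filtered), the associated graded differential $\partial_0$ counts only empty rectangles whose two moving corners stay inside one of the four blocks $g_{ij}$, or are exchanged across the central crossing in a balanced way. It then suffices to show that each graded piece $(F_k,\partial_0)$ is acyclic for $k \geq 1$.

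The second step is to pair up the generators of each $F_k$ by a canonical involution $\mathbf{x} \leftrightarrow \mathbf{x}^*$. For $\mathbf{x} \in F_k$ one selects a distinguished point of $\mathbf{x}$ (determined by its position relative to $a,b,c,d$) and slides it through a small empty rectangle that contains no $X$-marking and no $O$-marking, so that the corresponding term of $\partial_0$ is exactly $\mathbf{x}^*$. For $k \geq 2$, the multiplicity of points in $\mathbf{x}_{12}$ and $\mathbf{x}_{21}$ gives enough freedom to choose such a point; for $k = 1$, the condition $\mathbf{x} \notin \mathbf{AD}_1(g_\#)$ (so $a \notin \mathbf{x}$ or $d \notin \mathbf{x}$) is exactly what is needed to guarantee that the swap is well-defined and that $\mathbf{x}^*$ also lies outside $\mathbf{AD}_1(g_\#)$. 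The pairing organizes $F_k$ into a direct sum of two-term acyclic subcomplexes, proving acyclicity of each graded piece; a standard spectral-sequence collapse then yields acyclicity of the total complex $GC^-(g_\#)/C$.

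The main obstacle will be defining the pairing $\mathbf{x} \mapsto \mathbf{x}^*$ canonically and uniformly across all configurations, and verifying in each case that the chosen rectangle is empty, disjoint from $\mathbb{X}$, and carries no $O$-markings, so that it indeed contributes $1$ to $\partial_0$ rather than a nontrivial $U_i$-monomial. The case analysis should branch on which of the points $a,b,c,d$ are contained in $\mathbf{x}$, and the most delicate case is $k=1$, where the pairing must respect the exclusion of $\mathbf{AD}_1(g_\#)$ from the quotient. Once this bookkeeping is carried out, acyclicity of $GC^-(g_\#)/C$ follows.
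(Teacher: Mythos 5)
Your overall strategy—filter $GC^-(g_\#)/C$ by $k=\#\mathbf{x}_{12}=\#\mathbf{x}_{21}$ and show the graded pieces are acyclic by grouping states connected through small rectangles near $a,b,c,d$—is the same as the paper's, which uses the increasing filtration $S_0\subset S_0\oplus S_1\subset\cdots\subset GC^-(g_\#)$ and reduces to the two cited lemmas from the spatial-graph paper.

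However, the step ``the pairing organizes $F_k$ into a direct sum of two-term acyclic subcomplexes'' is too strong and will not go through as stated. Even if you can single out, for each generator $\mathbf{x}$, an empty $X$- and $O$-free rectangle to a partner $\mathbf{x}^*$, the differential $\partial_0\mathbf{x}$ typically has \emph{other} terms besides $\mathbf{x}^*$, so $\{\mathbf{x},\mathbf{x}^*\}$ does not span a subcomplex, let alone a direct summand. The paper avoids this by building groupings \emph{consecutively}: one repeatedly chooses a grouping that is a subcomplex of the quotient of $(S_0\oplus S_1)/C$ by the previously chosen groupings, and finiteness in each Alexander grading guarantees the process terminates. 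Moreover, the groupings are of size two \emph{or four} (the four-term case occurring roughly when both a vertical and a horizontal swap are available), and the paper's case analysis is driven precisely by whether $a\in\mathbf{x}$, $d\in\mathbf{x}$, or neither—exactly the point your outline flags as ``delicate for $k=1$,'' but which must be carried out rather than asserted. Finally, the paper does not treat all $k\geq1$ uniformly: the quotient $GC^-(g_\#)/(S_0\oplus S_1)$ (i.e.\ $k\geq2$) is handled by one argument, while $(S_0\oplus S_1)/C$ (i.e.\ $k=1$ with $\mathbf{AD}_1(g_\#)$ removed) requires the separate grouping analysis, so ``the multiplicity of points gives enough freedom'' for $k\geq2$ is not by itself what makes that range easier—it is a different structural argument. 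In short, your high-level plan is right, but the acyclicity of the graded pieces cannot be reduced to a global involution; you need the consecutive-subcomplex (or equivalently a further Alexander filtration) argument with groups of size two or four.
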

\begin{proof}
For $i=0,\dots,n$, let $S_i$ be a free $\mathbb{F}[U_1,\dots,U_{2n}]$-module generated by $\mathbf{S}_i(g_\#)$.
Then there is a sequence of subcomplexes
\[
S_0\subset(S_0\oplus S_1)\subset\dots\subset(S_0\oplus\dots\oplus S_n)=GC^-(g_\#).
\]

Consider the quotient complex $(S_0\oplus\dots\oplus S_n)/(S_0\oplus S_1)$.
This complex is finite in each Alexander grading.
Therefore the same argument as \cite[Lemma 5.6]{Grid-homology-for-spatial-graphs-and-a-Kunneth-formula-of-connected-sum} shows that it is acyclic.

We will verify that $(S_0\oplus S_1)/C$ is acyclic, which can be shown in the same way as \cite[Lemma 5.7]{Grid-homology-for-spatial-graphs-and-a-Kunneth-formula-of-connected-sum}.
Consider a state $\mathbf{x}=\begin{pmatrix}
\mathbf{x}_{11} & \mathbf{x}_{12} \\
\mathbf{x}_{21} & \mathbf{x}_{22} \\
\end{pmatrix}\in\s(g_\#)\setminus\mathbf{AD}_1(g_\#)$.
First we will group the states of $\s(g_\#)\setminus\mathbf{AD}_1(g_\#)$.
\begin{enumerate}[(1)]
    \item If $a\in\mathbf{x}$ and $d\notin\mathbf{x}$, then $\mathbf{x}_{11}$ contains a point on $\alpha_{n+1}$ or $\mathbf{x}_{22}$ contains a point on $\beta_{N+1}$.
    We can take a grouping consisting of two or four states in the sense of \cite[Lemma 5.7]{Grid-homology-for-spatial-graphs-and-a-Kunneth-formula-of-connected-sum}.
    Make one grouping for each state containing $a$.
    We remark that one of the states forming a grouping contains $a$.
    \item If $d\in\mathbf{x}$ and $a\notin\mathbf{x}$, then $\mathbf{x}_{11}$ contains a point on $\alpha_{1}$ or $\mathbf{x}_{22}$ contains a point on $\beta_{1}$.
    We can take a grouping consisting of two or four states.
    Make one grouping for each state containing $d$.
    We remark that one of the states forming a grouping contains $d$.
    \item The remaining states also form groups consisting of four states.
\end{enumerate}
Since $(S_0\oplus S_1)/C$ is finite in each Alexander grading, we can choose a grouping consecutively so that the chosen grouping is the subcomplex of the quotient complex of $(S_0\oplus S_1)/C$ by already selected groupings.
\cite[Lemma 5.7]{Grid-homology-for-spatial-graphs-and-a-Kunneth-formula-of-connected-sum} shows that each chosen grouping is acyclic.
Since $(S_0\oplus S_1)/C$ is finite in each Alexander grading, repeating this procedure completes the proof. 
\end{proof}

\subsection{The quasi-isomorphism \texorpdfstring{$C\to GC^-(g_1)\otimes_{\mathbb{F}}GC^-(g_2)/U_1=U_{2n}$}{}}
\label{subsec:quasi-f}
Let $C_0$ be the subcomplex of $C$ freely generated by $\s_0(g_\#)$.
Let $AD$ is a free $\mathbb{F}[U_1,\dots,U_{2n}]$-module generated by $\mathbf{AD}_1(g_\#)$.
Then $C$ is written as a mapping cone $\mathrm{Cone}(f\colon AD\to C_0)$, where $f$ counts the rectangles from a state of $AD$ to a state of $C_0$.
$f$ is obviously injective and thus there is a quasi-isomorphism $\mathrm{Cone}(f)\to C_0/\mathrm{Im}f$ (Lemma \ref{lem:conef-c/f}).
The rectangle $r\in\mathrm{Rect}^\circ(\mathbf{x},\mathbf{y})$ counted by the differential of $C_0$ does not contain $O_n$ but may contain $O_{n+1}$.
In addition, its corner points $(\mathbf{x}\cup\mathbf{y})\setminus(\mathbf{x}\cap\mathbf{y})$ are contained in either $g_{11}$ or $g_{22}$.
Hence we have an isomorphism of chain complexes over $\mathbb{F}[U_1,\dots,U_{2n}]$-module
\[
C_0 \cong GC^-(g_1')\otimes_{\mathbb{F}[U_{n+1}]}GC^-(g_2')[U_n],
\]
where we regard $GC^-(g_1')$ and $GC^-(g_2')$ as chain complexes over $\mathbb{F}[U_1,\dots,U_{n-1},U_{n+1}]$ and $\mathbb{F}[U_{n+1},\dots,U_{2n}]$ respectively, and $GC^-(g_1')\otimes_{\mathbb{F}[U_{n+1}]}GC^-(g_2')$ is considered to be a chain complex over $\mathbb{F}[U_1,\dots,U_{n-1},U_{n+1},\dots,U_{2n}]$.
On the other hand, in fact, $\mathrm{Im}f$ is isomorphic to $(U_n+U_{n+1})\cdot GC^-(g_1)\otimes_{\mathbb{F}}GC^-(g_2)[U_n,U_{n+1}]$.
To compare $C_0$ and $\mathrm{Im}f$, we will give a representation of $C_0$ using $g_1$ and $g_2$ rather than $g_1'$ and $g_2'$ using the destabilization map $\mathcal{D}$.

\begin{lem}[{\cite[Lemma A.3.9]{grid-book}}]
\label{lem:conef-c/f}
If $f\colon D\to D'$ is an injective chain map for bigraded chain complexes, then the map $\mathrm{Cone}(f)\to D'/f(D)$, $(d,d')\mapsto q(d')$ is a quasi-isomorphism, where $q\colon D'\to D'/f(D)$ is the projection.
\end{lem}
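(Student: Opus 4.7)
The plan is to realize the map $\mathrm{Cone}(f) \to D'/f(D)$ as projection modulo an explicit acyclic subcomplex, so that the resulting short exact sequence of chain complexes, together with its long exact sequence in homology, forces the map to be a quasi-isomorphism.

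First I would verify that the stated map is a bigraded chain map. Writing an element of $\mathrm{Cone}(f)$ as a pair $(d, d') \in D \oplus D'$ with differential $\partial(d, d') = (\partial d,\, f(d) + \partial d')$, one has $q(f(d) + \partial d') = q(\partial d') = \partial q(d')$ because $q \circ f = 0$, so the map commutes with differentials. Bigradedness is automatic from the hypothesis that $f$ is bigraded and from the way the cone and the quotient inherit their gradings.

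Next I would identify the kernel $K$ of the map explicitly. An element $(d, d')$ lies in $K$ if and only if $d' \in f(D)$, and since $f$ is injective this $d'$ has a unique preimage $\tilde d \in D$. Under the resulting bigraded identification $(d, \tilde d) \leftrightarrow (d, f(\tilde d))$, the cone differential restricts on $K$ to $\partial(d, \tilde d) = (\partial d,\, d + \partial \tilde d)$, which is precisely the differential of $\mathrm{Cone}(\id_D \colon D \to D)$.

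The key step is that this mapping cone of the identity is chain-contractible: the map $h(d, \tilde d) = (\tilde d, 0)$ is a null-homotopy, as a direct check gives $\partial h + h \partial = \id$ (and working over $\mathbb{F} = \mathbb{Z}/2\mathbb{Z}$ eliminates any sign concerns). Consequently $K$ is acyclic, so the short exact sequence $0 \to K \to \mathrm{Cone}(f) \to D'/f(D) \to 0$ yields a long exact sequence in homology that collapses to give an isomorphism $H_*(\mathrm{Cone}(f)) \cong H_*(D'/f(D))$ induced by the claimed map. This is a formal homological-algebra statement, so there is no real obstacle; the only bookkeeping is that all of the maps above respect the bigrading, which follows at once from the corresponding property of $f$.
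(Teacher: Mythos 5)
The paper only cites this lemma (Lemma A.3.9 of the grid-homology textbook) and does not reprove it, so there is no in-paper argument to compare against. Your proof is correct and is the standard one: the map $(d,d')\mapsto q(d')$ is a surjective chain map because $q\circ f=0$, its kernel $K$ is identified via the injectivity of $f$ with $\mathrm{Cone}(\id_D)$, the explicit homotopy $h(d,\tilde d)=(\tilde d,0)$ contracts $K$ (the computation $\partial h+h\partial=\id$ checks out, and working over $\mathbb{F}_2$ removes all sign issues), and the long exact sequence of $0\to K\to \mathrm{Cone}(f)\to D'/f(D)\to 0$ then gives the quasi-isomorphism. Your remark on bigradings is also right: the map factors through the degree-zero projection onto the $D'$ summand, whose grading is unshifted in the cone, so everything is compatible. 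An equivalent alternative would be to compare the long exact sequence of the cone with that of the short exact sequence $0\to D\to D'\to D'/f(D)\to 0$ and apply the five lemma, but your direct kernel-is-acyclic argument is just as clean and arguably more self-contained.
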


Let $\mathbf{II}(g_\#), \mathbf{IN}(g_\#), \mathbf{NI}(g_\#)$, and $\mathbf{NN}(g_\#)$ be the subset of $\s_0(g_\#)$ defined by
\begin{align*}
\mathbf{II}(g_\#) &=\{\mathbf{x}\in\s_0(g_\#)|b,c\in\mathbf{x}\},\\
\mathbf{IN}(g_\#) &=\{\mathbf{x}\in\s_0(g_\#)|b\in\mathbf{x},c\notin\mathbf{x}\},\\
\mathbf{NI}(g_\#) &=\{\mathbf{x}\in\s_0(g_\#)|b\notin\mathbf{x},c\in\mathbf{x}\},\\
\mathbf{NN}(g_\#) &=\{\mathbf{x}\in\s_0(g_\#)|b,c\notin\mathbf{x}\},\\
\end{align*}
and let $II,IN,NI$ and $NN$ be the free $\mathbb{F}[U_1,\dots,U_{2n}]$-module generated by each of them respectively.
Let $\mathbf{I}_{g_1'}$ and $\mathbf{N}_{g_1'}$ be the chain complexes defined in Section $\ref{subsed:stabi-destabi}$ with $g'=g_1'$.
By replacing $U_{n}$ with $U_{n+1}$, we regard them as a chain complex over $\mathbb{F}[U_1,\dots,U_{n-1},U_{n+1}]$-module.
Let $\mathbf{I}_{g_2'}$ and$\mathbf{N}_{g_2'}$ be the the chain complexes with $g'=g_2'$ similarly.
These are chain complexes $\mathbb{F}[U_{n+1},\dots,U_{2n}]$-module.
Then we can naturally identify of chain complexes over $\mathbb{F}[U_1,\dots,U_{2n}]$-module
\begin{align}
\label{eq:II-IxI}
II &\cong (\mathbf{I}_{g_1'}\otimes_{\mathbb{F}[U_{n+1}]}\mathbf{I}_{g_2'})[U_n],\\
IN &\cong (\mathbf{I}_{g_1'}\otimes_{\mathbb{F}[U_{n+1}]}\mathbf{N}_{g_2'})[U_n],\\
NI &\cong (\mathbf{N}_{g_1'}\otimes_{\mathbb{F}[U_{n+1}]}\mathbf{I}_{g_2'})[U_n],\\
NN &\cong (\mathbf{N}_{g_1'}\otimes_{\mathbb{F}[U_{n+1}]}\mathbf{N}_{g_2'})[U_n].
\end{align}

Recall that $C_0\cong GC^-(g_1')\otimes_{\mathbb{F}[U_{n+1}]}GC^-(g_2')[U_n]$.
We will apply the destabilization map for $GC^-(g_1')$ and $GC^-(g_2')$.
For abbreviation, we write $G$ instead of $GC^-(g_1)\otimes_{\mathbb{F}}GC^-(g_2)[U_n,U_{n+1}]$.
Let $\mathrm{Cone}(G)$ be the mapping cone of the following commutative diagram
\begin{equation}
\label{diagram:G}
\xymatrix@=50pt{
G \ar[r]^{U_{n+1}-U_{2n}} \ar[d]_-{U_1-U_{n+1}} & G \ar[d]^-{U_1-U_{n+1}}\\
G \ar[r]^-{U_{n+1}-U_{2n}}  & G.
}
\end{equation}
Consider the combined destabilization map
\begin{align*}
\mathcal{D}_{SE}\otimes \mathcal{D}_{NW} \colon C_0\cong GC^-(g_1')\otimes_{\mathbb{F}[U_{n+1}]}GC^-(g_2')[U_n] \to \mathrm{Cone}(G).
\end{align*}
This map is understood as the diagram in Figure \ref{diagram:desta-desta}.
\begin{figure}
\[
\xymatrix@=110pt{
NN\ar[rrr]^-{\mathrm{id}\otimes\partial_\mathbf{N}^\mathbf{I}} \ar[ddd]_-{\partial_\mathbf{N}^\mathbf{I}\otimes\mathrm{id}}|(.18)\hole|(.19)\hole|(.33)\hole|(.35)\hole \ar[dr]_-{(e\circ \mathcal{H}_{O_1})\otimes(e\circ \mathcal{H}_{O_1})}\ar[rrd]^-{(e\circ \mathcal{H}_{O_1})\otimes(e\circ \mathcal{H}_{Hex})}\ar[ddr]_-{(e\circ \mathcal{H}_{Hex})\otimes(e\circ \mathcal{H}_{O_1})}|(.26)\hole|(.29)\hole & {} & {} & NI\ar[ddd]^-{\partial_\mathbf{N}^\mathbf{I}\otimes\mathrm{id}}\ar[dl]_-{(e\circ \mathcal{H}_{O_1})\otimes e}\ar[ddl]^-{(e\circ \mathcal{H}_{Hex})\otimes e}\\
{} & G\ar[r]_-{U_{n+1}-U_{2n}}\ar[d]^-{U_1-U_{n+1}} & G\ar[d]_-{U_1-U_{n+1}} & {} \\
{} & G\ar[r]^-{U_{n+1}-U_{2n}} & G & {} \\
IN\ar[rrr]_-{\mathrm{id}\otimes\partial_\mathbf{N}^\mathbf{I}} \ar[ur]^-{e\otimes(e\circ \mathcal{H}_{O_1})}\ar[rru]_-{(e\circ \mathcal{H}_{Hex})\otimes e} & {} & {} & II\ar[ul]^-{e\circ e}
}
\]
\caption{A commutative diagram of the combined destabilization map}
\label{diagram:desta-desta}
\end{figure}
This diagram contains four diagrams the same as \eqref{diagram:desta} between the outside and inside squares.
$\mathcal{D}_{SE}$ and $\mathcal{D}_{NW}$ are chain homotopy equivalences and so is $\mathcal{D}_{SE}\otimes \mathcal{D}_{NW}$.

\begin{lem}
\label{lem:Imf-CxC}
We have an isomorphism $e\circ e\colon \mathrm{Im}(f)=(U_n+U_{n+1})\cdot II \cong (U_n+U_{n+1})\cdot GC^-(g_1)\otimes_{\mathbb{F}}GC^-(g_2)[U_n,U_{n+1}]$.
\end{lem}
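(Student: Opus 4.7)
The plan is to prove the lemma in two stages: first show $\mathrm{Im}(f) = (U_n+U_{n+1}) \cdot II$ by an explicit rectangle count, then apply $e \circ e$ to the identification \eqref{eq:II-IxI} to obtain the claimed isomorphism.

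For the first stage, fix $\mathbf{x} \in \mathbf{AD}_1(g_\#)$, so $\mathbf{x}_{21} = \{a\}$, $\mathbf{x}_{12} = \{d\}$, and all remaining points of $\mathbf{x}$ lie in $\mathbf{g}_{11} \cup \mathbf{g}_{22}$. Any empty rectangle $r \in \mathrm{Rect}^\circ(\mathbf{x}, \mathbf{y})$ with $\mathbf{y} \in \s_0(g_\#)$ must remove both $a$ and $d$, since $\mathbf{y}$ has no points in $\mathbf{g}_{12} \cup \mathbf{g}_{21}$; hence $\{a, d\}$ occupy the SW and NE corners of $r$, which forces the NW and SE corners to be $\{b, c\}$. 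Thus $\mathbf{y}$ is uniquely determined as $\mathbf{y}(\mathbf{x}) := (\mathbf{x} \setminus \{a, d\}) \cup \{b, c\} \in \mathbf{II}(g_\#)$, and $\mathbf{x} \mapsto \mathbf{y}(\mathbf{x})$ defines a bijection $\mathbf{AD}_1(g_\#) \to \mathbf{II}(g_\#)$. The four toroidal rectangles with corner set $\{a, b, c, d\}$ are precisely the blocks $g_{11}, g_{12}, g_{21}, g_{22}$, and a direct check of corner orientations shows that only $g_{21}$ (with SW $= a$, NE $= d$) and $g_{12}$ (with SW $= d$, NE $= a$) are rectangles from $\mathbf{x}$ to $\mathbf{y}$; the other two run the opposite way. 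Emptiness follows from $\#\mathbf{x}_{21} = \#\mathbf{x}_{12} = 1$ and $\#\mathbf{y}_{21} = \#\mathbf{y}_{12} = 0$.

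Reading off the marking content, the final swap of $O$-markings in the construction of $g_\#$ places $O_n$ at row $n$, column $n$ (inside $g_{21}$) and $O_{n+1}$ at row $n+1$, column $n+1$ (inside $g_{12}$), while every $X$-marking of $g_\#$ sits in $g_{11} \cup g_{22}$. Hence $f(\mathbf{x}) = U_n \mathbf{y}(\mathbf{x}) + U_{n+1} \mathbf{y}(\mathbf{x}) = (U_n + U_{n+1})\,\mathbf{y}(\mathbf{x})$, and extending $\mathbb{F}[U_1, \ldots, U_{2n}]$-linearly yields $\mathrm{Im}(f) = (U_n + U_{n+1}) \cdot II$.

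For the second stage, Lemma \ref{lem:e:I-C} applied to the destabilizations $g_1' \to g_1$ and $g_2' \to g_2$ (both using $U_{n+1}$ as the relabeled variable for the destabilized $O$-marking, as set up in Section \ref{sec:preparation-proof}) gives bigraded chain isomorphisms $e\colon \mathbf{I}_{g_i'} \to GC^-(g_i)[U_{n+1}]$ for $i = 1, 2$. Tensoring and using the standard identification
\[
GC^-(g_1)[U_{n+1}] \otimes_{\mathbb{F}[U_{n+1}]} GC^-(g_2)[U_{n+1}] \cong (GC^-(g_1) \otimes_\mathbb{F} GC^-(g_2))[U_{n+1}],
\]
combined with \eqref{eq:II-IxI} and adjoining the extra $U_n$, yields $e \circ e\colon II \to GC^-(g_1) \otimes_\mathbb{F} GC^-(g_2)[U_n, U_{n+1}]$ as chain complexes over $\mathbb{F}[U_1, \ldots, U_{2n}]$. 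Since $U_n + U_{n+1}$ is central, restricting to its multiples preserves the isomorphism, and combined with the first stage this proves the lemma. The main obstacle is the geometric bookkeeping of the first stage: one must carefully trace through the construction of $g_\#$ (especially the final $O$-swap) to verify the locations of $O_n$, $O_{n+1}$ and the absence of $X$-markings in $g_{12} \cup g_{21}$; once that is confirmed, the rectangle count is immediate and the remainder is routine algebra.
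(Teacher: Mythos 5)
Your proposal is correct and follows essentially the same route as the paper's proof: show $\mathrm{Im}(f)=(U_n+U_{n+1})\cdot II$ by observing that $f$ applied to any $\mathbf{x}\in\mathbf{AD}_1(g_\#)$ counts exactly the two empty rectangles (the blocks $g_{21}$ and $g_{12}$) with corner set $\{a,b,c,d\}$, contributing $U_n$ and $U_{n+1}$, and then transport $II$ to $GC^-(g_1)\otimes_{\mathbb{F}}GC^-(g_2)[U_n,U_{n+1}]$ via \eqref{eq:II-IxI} together with Lemma \ref{lem:e:I-C}. The paper states the rectangle count in one sentence without justification; your write-up supplies the geometric bookkeeping (locating $O_n$, $O_{n+1}$ after the $O$-swap, checking the absence of $X$-markings in $g_{12}\cup g_{21}$, and verifying emptiness) that makes that sentence true, but the underlying argument is the same.
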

\begin{proof}
Recall that the chain map $f$ counts rectangles from a state of $AD$ to a state of $C_0$.
$f$ always counts exactly two rectangles whose two of four corners are $a$ and $d$.
So we have $\mathrm{Im}(f)=(U_n+U_{n+1})\cdot II$.
By \eqref{eq:II-IxI} and Lemma \ref{lem:e:I-C}, the map $e\circ e\colon II \cong GC^-(g_1)\otimes_{\mathbb{F}}GC^-(g_2)[U_n,U_{n+1}]$ is an isomorphism of bigraded chain complexes.
\end{proof}

\begin{proof}[proof of Theorem \ref{thm:main:connected}]
By Lemma \ref{lem:conef-c/f}, there is a quasi-isomorphism 
\[
\Phi\colon\mathrm{Cone}(G)\to \frac{G}{U_1=U_{n+1}=U_{2n}},\ \Phi\left(\begin{pmatrix}
g_{11} & g_{12} \\
g_{21} & g_{22}
\end{pmatrix} \right)=p(g_{22}),
\]
where $p\colon G\to G/(U_1=U_{n+1}=U_{2n})$ is the projection.
By Lemma \ref{lem:Imf-CxC} we have the following commutative diagram:
\[
\xymatrix@=50pt{
AD\ar[r]^-f \ar[d]^-{e\circ e} & C_0 \ar[d]^-{\mathcal{D}_{SE}\otimes \mathcal{D}_{NW}} \\
G \ar[r]^-{i\circ (U_n+U_{n+1})} \ar[dr]_-{\overline{i\circ (U_n+U_{n+1})}} & \mathrm{Cone}(G)\ar[d]^-{\Phi}\\
{} & \frac{G}{U_1=U_{n+1}=U_{2n}} 
}
\]
where the vertical maps are quasi-isomorphisms and $i$ is the inclusion into the fourth $G$ of $\mathrm{cone}(G)$ and $\overline{i\circ (U_n+U_{n+1})}$ is the induced map.
Then we have a quasi-isomorphism $\Psi\colon \mathrm{Cone}(f)\to\mathrm{Cone}(\overline{i\circ(U_n+U_{n+1})})$.
Again, applying Lemma \ref{lem:conef-c/f} to $\overline{i\circ(U_n+U_{n+1})}$, there is a quasi-isomorphism $\mathrm{Cone}(i\circ(U_n+U_{n+1}))\to \frac{GC^-(g_1)\otimes_{\mathbb{F}}GC^-(g_2)[U_n,U_{n+1}]}{U_1=U_n=U_{n+1}=U_{2n}}$.
The quotient complex $\frac{GC^-(g_1)\otimes_{\mathbb{F}}GC^-(g_2)[U_n,U_{n+1}]}{U_1=U_n=U_{n+1}=U_{2n}}$ is naturally identified with $\frac{GC^-(g_1)\otimes_{\mathbb{F}}GC^-(g_2)}{U_1=U_{2n}}$.

Finally, we write the quasi-isomorphism $\eta\colon C\to \frac{GC^-(g_1)\otimes_{\mathbb{F}}GC^-(g_2)}{U_1=U_{2n}}$ explicitly.
$\eta$ is the composition of the following maps:
\[
\xymatrix@=21pt{
C=\mathrm{Cone}(f) \ar[r]^-\Psi & \mathrm{Cone}(\overline{i\circ(U_n+U_{n+1})}) \ar[r] & \frac{GC^-(g_1)\otimes_{\mathbb{F}}GC^-(g_2)[U_n,U_{n+1}]}{U_1=U_n=U_{n+1}=U_{2n}} \cong \frac{GC^-(g_1)\otimes_{\mathbb{F}}GC^-(g_2)}{U_1=U_{2n}}.
}
\]
Recall that $C$ is freely generated by $\mathbf{AD}_1(g_\#)\cup\s_0(g_\#)$ (Definition \ref{dfn:C}).
So each term of an element of $C$ can be written as $U_1^{k_1}\cdots U_{2n}^{k_{2n}}\mathbf{x}$.
By tracing the diagrams above, we have that
\begin{itemize}
    \item If $\mathbf{x}\in \mathbf{AD}_1(g_\#)$, then $\eta(\mathbf{x})=0$,
    \item If $\mathbf{x}\in \mathbf{NN}(g_\#)$, then $\eta(\mathbf{x})=0$,
    \item If $\mathbf{x}\in \mathbf{IN}(g_\#)$, then $\eta(\mathbf{x})=((e\circ \mathcal{H}_{Hex})\otimes e)(\mathbf{x})$,
    \item If $\mathbf{x}\in \mathbf{NI}(g_\#)$, then $\eta(\mathbf{x})=(e \otimes (e\circ \mathcal{H}_{Hex}))(\mathbf{x})$,
    \item If $\mathbf{x}\in \mathbf{II}(g_\#)$, then $\eta(\mathbf{x})=(e \otimes e)(\mathbf{x})$,
\end{itemize}
and $\eta(U_1)=\eta(U_n)=\eta(U_{n+1})=\eta(U_{2n})=U_1$ and $\eta(U_i)=U_i$ for $i\neq 1,n,n+1,2n$.

\end{proof}

\section{The Legendrian grid invariant}
\label{sec:Legendrian-invariant}
\begin{dfn}
For a grid diagram $g$, let $\mathbf{x}^+(g)\in\mathbf{S}(g)$ be the state consisting of the northeast corners of each square containing $X$.
Similarly, $\mathbf{x}^-(g)\in\mathbf{S}(g)$ is the state consisting of the southwest corners of each square containing an $X$-marking.
\end{dfn}

\begin{dfn}
\label{dfn:grid-Legendrian-inv}
For a Legendrian knot $\mathcal{K}$, the \textbf{grid Legendrian invariants} $\lambda^+(\mathcal{K})$ and $\lambda^-(\mathcal{K})$ are the homology class $[\mathbf{x}^+(g)], [\mathbf{x}^-(g)]\in GH^-(g)$ respectively, where $g$ is a grid diagram representing $\mathcal{K}$.
\end{dfn}

\begin{proof}[proof of Theorem \ref{thm:connected-Legendrian-invariant}]
We will use the notations in Section \ref{sec:proof-of-main-thm}.
Consider a grid diagram $g_\#$.
Both $\mathbf{x}^+(g_\#)$ and $\mathbf{x}^-(g_\#)$ are in $\mathbf{AD}_1(g_\#)$.
The quasi-isomorphism induced by the inclusion $i\colon C \to GC^-(g_\#)$ sends $[\mathbf{x}^\pm(g_\#)]$ to $[\mathbf{x}^\pm(g_\#)]$.
The quasi-isomorphism $\eta\colon C\to \frac{GC^-(g_1)\otimes_{\mathbb{F}}GC^-(g_2)}{U_1=U_{2n}}$ sends $[\mathbf{x}^\pm(g_\#)]$ to $[(e\circ e)(\mathbf{x}^\pm(g_\#))]=[\mathbf{x}^\pm(g_1)\otimes\mathbf{x}^\pm(g_2)]$.
Then Theorem \ref{thm:invariance-lambda} completes the proof.
\end{proof}

\bibliography{grid}
\bibliographystyle{amsplain} 

\end{document}